\documentclass[11pt,reqno]{article}

\usepackage[margin=1in]{geometry}
\usepackage{amsmath,amsthm,amssymb}

\usepackage{esint}


\usepackage[colorlinks=true, pdfstartview=FitV, linkcolor=blue,citecolor=blue, urlcolor=blue]{hyperref}

\usepackage[abbrev,lite,nobysame]{amsrefs}
\usepackage{times}
\usepackage[usenames,dvipsnames]{color}


\usepackage{mathtools,enumitem}

\usepackage[compact]{titlesec}

\usepackage[title]{appendix}

\usepackage{comment}

    



\newcommand{\eps}{\epsilon}

\newcommand{\grad}{\nabla}

\newcommand{\norm}[1]{\left|\left| #1 \right|\right|}
\newcommand{\abs}[1]{\left| #1 \right|}
\newcommand{\set}[1]{\left\{ #1 \right\}}
\newcommand{\brak}[1]{\left\langle #1 \right\rangle} 

\newcommand{\R}{\mathbb{R}}


\newcommand{\dee}{\mathrm{d}}

\newcommand{\dx}{\dee x}


\usepackage{bbm}

\newcommand{\EE}{\mathbf E}
\newcommand{\PP}{\mathbf P}

\newtheorem{theorem}{Theorem}[section]
\newtheorem{proposition}[theorem]{Proposition}

\newtheorem{lemma}[theorem]{Lemma}
\newtheorem*{lemma*}{Lemma}

\newtheorem{assumption}{Assumption}
\newtheorem{conjecture}{Conjecture}

\theoremstyle{definition}
\newtheorem{definition}[theorem]{Definition}
\newtheorem{remark}[theorem]{Remark}
\newtheorem{example}[theorem]{Example}


\setcounter{secnumdepth}{3}
\numberwithin{equation}{section}
    
\begin{document}

\title{A note on cascade flux laws for the stochastically-driven nonlinear Schr\"odinger equation} 
\author{Jacob Bedrossian\thanks{\footnotesize Department of Mathematics. University of California, Los Angeles, CA 90095. \href{mailto:jacob@math.ucla.edu}{\texttt{jacob@math.ucla.edu}} This work was supported by NSF Award DMS-2108633.} }

\maketitle

\begin{abstract}
In this note we point out some simple sufficient (plausible) conditions  for `turbulence' cascades in suitable limits of damped, stochastically-driven nonlinear Schr\"odinger equation in a $d$-dimensional periodic box.
Simple characterizations of dissipation anomalies for the wave action and kinetic energy in rough analogy with those that arise for fully developed turbulence in the 2D Navier-Stokes equations are given and sufficient conditions are given which differentiate between a `weak' turbulence regime and a `strong' turbulence regime.
The proofs are relatively straightforward once the statements are identified, but we hope that it might be useful for thinking about mathematically precise formulations of the statistically-stationary wave turbulence problem. 
\end{abstract}

\setcounter{tocdepth}{1}
{\small\tableofcontents}

\section{Introduction}
This note is regarding (weak) wave turbulence in dispersive PDE in the statistically stationary regime subjected to white-in-time stochastic forcing and dissipation.
We are interested in precising some simple sufficient conditions to observe the flux balance laws of both inverse and direct cascades that are observed in wave turbulence; see e.g. \cite{nazarenko2011wave,zakharov2012kolmogorov} and more discussions below.
In this section, we consider a torus of side-length $2\pi \lambda$,  $\mathbb T^d_\lambda$ with $d=2,3$ and the cubic nonlinear Schr\"odinger equation (NLS) rescaled in the following manner for two small parameters $\sigma,\nu > 0$ 
\begin{align}
-i d u = \left(\Delta u + \sigma \abs{u}^2 u + i \nu D(\grad) u\right) dt + i \sigma \sum_j g^\lambda_j dW_t^{(j)}, \label{def:NLS} 
\end{align}
The choice of splitting the $\sigma$ between the forcing and the nonlinearity is so that the formal limiting dynamics in the $\nu,\sigma \to 0$ limit is the linear Schr\"odinger equation; see Section \ref{sec:Phys} for more discussion on this point.
Here the $\set{W_t^{(j,\lambda)}}$ are independent Wiener processes on a common canonical filtered probability space $(\Omega,\mathcal{F},\mathcal{F}_t,\PP)$.
We have allowed force to depend on the side-length of the box, i.e. for each $\lambda > 0$, there is a different set of $\set{g^\lambda_j}$ and Brownian motions $W_t^{(j,\lambda)}$. 
The exact assumptions on the forcing is given below in Assumption \ref{Ass:g}; for now we just remark that the functions $g_j^\lambda$ are average zero, localized in frequency at scales $\approx 1$, locally uniformly bounded, i.e. $\sup_{\lambda,j} \norm{g^\lambda_j}_{W^{1,\infty}} < \infty$, and normalized such that the volume-averaged wave-action (mass) input per unit time is $\approx \sigma^2$, namely the noise is fixed so that the following parameter is approximately independent of $\lambda$:
\begin{align*}
\eps_{wa} = \eps_{wa}(\lambda) := \frac{1}{2}\sum_j \fint_{\mathbb T_\lambda^d} \abs{g^\lambda_j}^2 \dx, 
\end{align*}
where the crossed integral sign denotes average, i.e. $\fint_{\mathbb T_\lambda^d} = \frac{1}{(2\pi \lambda)^d} \int_{\mathbb T_\lambda^d}$.
We also define a corresponding $\eps_{ke}$, although its interpretation as related to `volume-averaged kinetic energy input per unit time' will only be precise in a weak turbulence regime
\begin{align*}
\eps_{ke} := \frac{1}{2}\sum_j \fint_{\mathbb T_\lambda^d} \abs{\grad g^\lambda_j}^2 \dx. 
\end{align*}
For simplicity and definiteness we set the dissipation operator $D(k)$ to simply be
\begin{align*}
D = 1-\Delta. 
\end{align*}
With this choice of $D$, \eqref{def:NLS} is almost-surely globally well-posed in both 2D and 3D and the associated Markov process has at least one stationary measure supported on smooth functions \cite{KS04} (see Theorem \ref{lem:NLSGWP} below for precise statement). 
In what follows, we generally use volume-averaged $L^p$ spaces,
\begin{align*}
\norm{f}_{L^p_\lambda} := \left( \fint_{\mathbb T^d_\lambda} \abs{f(x)}^p \dx \right)^{1/p}. 
\end{align*}
The (non-driven, non-damped) conservative initial value problem conserves two basic quantities, the wave action (here volume-averaged):
\begin{align*}
  \mathcal{WA}_\lambda[u]  = \frac{1}{2}\norm{u}_{L^2_\lambda}^2;
\end{align*}
and the Hamiltonian energy: 
\begin{align*}
\mathcal{H}_\lambda[u] = \frac{1}{2}\norm{\grad u}_{L^2_\lambda}^2 + \frac{1}{4} \sigma \norm{u}_{L^4_\lambda}^4. 
\end{align*}
Let $\mu = \mu_{\lambda,\nu,\sigma}$ be an invariant measure of the SPDE \eqref{def:NLS} (see Lemma \ref{lem:NLSGWP}). 
In the sequel we denote
\begin{align*}
\EE \phi := \EE_\mu \phi := \int_{L^2} \phi(u) \mu(du). 
\end{align*}
The white-in-time forcing implies the following balance law (note that because $\mathcal{H}$ is not quadratic, the It\^o correction still depends on $u$).
\begin{lemma}[Balance of wave action and Hamiltonian dissipation \cite{KS04}] 
For all $\lambda,\sigma,\nu>$, all invariant measures $\mu$ of \eqref{def:NLS} satisfy the balance of wave-action dissipation
\begin{align}
\nu \EE \norm{D^{1/2} u}_{L^2_\lambda}^2 = \sigma^2 \eps_{wa}, \label{eq:WABal}
\end{align}
and the balance of Hamiltonian energy dissipation
\begin{align}
\nu \EE \norm{\grad D^{1/2} u}_{L^2_\lambda}^2 + \nu \sigma \EE \norm{u}_{L^4_\lambda}^4 + \nu \sigma 3 \EE \norm{u \grad u}_{L^2_\lambda}^2 = \sigma^2 \eps_{ke} + \frac{\sigma^3}{2}\sum_j  \EE \fint_{\mathbb T_\lambda^d} \abs{u}^2 \abs{g^\lambda_j}^2 \dx. \label{eq:Hbal}
\end{align}
In what follows we use the following terminologies
\begin{align*}
  \nu \EE \norm{D^{1/2} u}_{L^2_\lambda}^2 = \textup{ ``wave-action dissipation''} \\
  \nu \EE \norm{\grad D^{1/2} u}_{L^2_\lambda}^2 + \nu \sigma \EE \norm{u}_{L^4_\lambda}^4 + \nu \sigma 3 \EE \norm{u \grad u}_{L^2_\lambda}^2 =  \textup{ ``Hamiltonian energy dissipation''} \\
  \nu \EE \norm{\grad D^{1/2} u}_{L^2_\lambda}^2 =  \textup{ ``kinetic energy dissipation''}. 
\end{align*}
\end{lemma}
The most fundamental concepts in statistical theories of turbulence are the notions of anomalous dissipation and nonlinear cascade, wherein a conserved quantity is injected and then through nonlinear effects, is sent either to lower frequencies (an inverse cascade) or to higher frequencies (a direct cascade) where it is eventually dissipated by the damping on the system, regardless of how small the dissipative parameter $\nu$ is (called `anomalous dissipation').
  The range of scales between the injection scale and the dissipation scale(s) is/are called the \emph{inertial range(s)}\footnote{This terminology refers to intermediate ranges of scales over which little to no external forcing or dissipative effects are directly acting on the solution. Note that this can happen in both large and small scales. The terminology originates in fluid mechanics, as in this range scales, it leaves  only the nonlinear/intertial and pressure effects.}, and it is here that we expect statistical universality, i.e. the statistics should be essentially independent of the exact form of the dissipative effects and external forcing.
The statistical theory of hydrodynamic turbulence in the 3D Navier-Stokes equations was founded in earnest by Kolmogorov in his K41 works \cite{K41a,K41b,K41c} where he gave a (non-mathematically rigorous) derivation of his $4/5$-law and predicted the power spectrum\footnote{The `power spectrum' regards a prediction of the $\nu \to 0$ asymptotics of $\EE \abs{k}^{d-1} \abs{\hat{u}(k)}^2$ (in the case of fluid mechanics, this is the kinetic energy density in a frequency shell).} in the inertial range; see for example \cite{Frisch1995,Eyink94} for more discussion.  
The $4/5$-law describes the on-average constant flux of kinetic energy from the injection scale down to the viscous dissipation scale in a statistically stationary flow. It is considered the only `exact law' of turbulence and an analogous law of constant flux of conserved quantities through the inertial ranges is expected in all turbulent systems (see discussions in \cite{nazarenko2011wave,zakharov2012kolmogorov}). 

\emph{Wave turbulence} in dispersive PDEs is expected to hold in a weakly nonlinear regime of turbulence, i.e. when the solution is sufficiently small for the linear wave dynamics to dominate on large time-scales.  
The classical wave turbulence theory began in \cite{Peierls1929} with the derivation of the wave-kinetic equation (WKE) and was later continued in \cite{H62,H63}; see for example \cite{spohn2008boltzmann,nazarenko2011wave,zakharov2012kolmogorov} for modern expositions on the topic. 
The formal derivation assumes the dynamics to be leading order given by the linear dispersive dynamics and studies the resonant interactions between waves over long-time scales to derive a leading order nonlinear effect. 
The derivation is meant to hold for time-scales $1 \ll t \approx T_{kin}$, the so-called `kinetic time', which is a characteristic time-scale for the leading order nonlinear effects. 
The WKE for the NLS was recently given a mathematically rigorous proof by Deng and Hani in \cite{DengHani21,DengHani23,DH21_PC,DH23,deng2023long} where it was shown to correctly predict the dynamics of the deterministic nonlinear Schr\"odinger with suitable small, random initial data (with no damping or driving) in a certain parameter regime for times $t < \delta T_{kin}$, i.e. a small, fixed fraction of the kinetic time-scale (see also \cite{BGHS21} for earlier progress) and then finally to arbitrary finite times in their recent work \cite{deng2023long}.
A variety of related works have appeared recently, for example \cite{staffilani2021wave,CG20,ACG21,CHG22,DK21,DK23,dymov2021large} and the references therein. 

In this paper we are interested in discussing the statistically stationary setting, i.e. we send $t \to \infty$ first, and then send the parameters $\nu,\sigma,\lambda^{-1} \to 0$. 
Specifically, we are interested in identifying the correct scaling limits of the equations in which one should see wave turbulence (vs fully developed turbulence) and to identify easy variants of the associated constant flux balance laws.  
There are several motivations for this. Firstly, many physical systems of interest are modeled as a damped-driven, statistically stationary system (note that this setting is out of equilibrium in the statistical mechanics sense).
Indeed, most systems in the physics and engineering literature are considered in exactly this regime (see discussions in e.g. \cite{nazarenko2011wave,zakharov2012kolmogorov,ZP04}). 
Secondly, the statistically stationary setting opens up the possibility of using a variety of ideas from stochastic PDEs, ergodic theory, and random dynamical systems, which may ultimately be very helpful for making mathematically rigorous studies of wave turbulence beyond the kinetic time-scale.

In Section \ref{sec:NecSuff} we discuss necessary and sufficient conditions to characterize a full dissipation anomaly in the weak turbulence regime, in Section \ref{sec:NA} we discuss sufficient conditions to rule out any dissipation anomaly, and in Section \ref{sec:PDA} we discuss the possibility of `partial' dissipation anomalies.
In Section \ref{sec:STR} we briefly discuss the strong turbulence regime (which would be related to quantum hydrodynamics, rather than wave turbulence).
Finally in Section \ref{sec:Phys} we discuss the relationship of the results with the physics literature, such as empirical observations, and end with some mathematical conjectures. 

\section{Necessary and sufficient conditions for full dissipation anomaly} \label{sec:NecSuff}

By Fj{\o}rtoft's argument (see e.g. \cite{nazarenko2011wave}) suggests we will see wave action being sent by the nonlinearity to larger and larger scales (before being dissipated at a large-scale dissipation range) while Hamiltonian energy is sent to smaller and smaller scales (before being dissipated at the small-scale dissipation range).
However, in a weak turbulence regime, we expect a direct cascade of \emph{kinetic energy} rather than the full Hamiltonian energy, as the potential energy is expected to be higher order.
The (volume-averaged) wave-action flux and Hamiltonian flux through scale $N$ are defined as the following, where here $P_{\leq N}$ (resp. $P_{\geq N}$) denotes the Littlewood-Paley projection to frequencies less (resp. greater) than $N$,
\begin{align*}
\Pi_{WA}[u](N) & = \mathrm{Im} \fint_{\mathbb T^d_\lambda} \overline P_{\geq N}u P_{\geq N} (\abs{u}^2 u) \dx \\
\Pi_{\mathcal{H}}[u](N) & = -\mathrm{Im} \fint_{\mathbb T^d_\lambda} \Delta \overline P_{\leq N}u \left( P_{\leq N} (\abs{u}^2 u) - \abs{P_{\leq N}u}^2 P_{\leq N}u \right) \dx \\ & \quad + \sigma \mathrm{Im} \fint_{\mathbb T^d_\lambda} P_{\leq N} (\abs{u}^2 u) \abs{P_{\leq N} u}^2 P_{\leq N} u \dx. 
\end{align*}
For most of the paper we omit the `$[u]$' as it is always clear from context.
We additionally define the kinetic energy `flux': 
\begin{align*}
\Pi_{KE}(N) & = -\mathrm{Im} \fint_{\mathbb T^d_\lambda}  \Delta P_{\leq N} \overline u P_{\leq N} (\abs{u}^2 u) \dx. 
\end{align*}
In what follows we also denote Littlewood-Paley projections via subscripts 
\begin{align*}
P_{\leq N} u = u_{\leq N}, \quad P_{\geq N} u = u_{\geq N}. 
\end{align*}
Let us briefly discuss these definitions.
Consider for a moment the conservative nonlinear Schr\"odinger equation
\begin{align}
-i \partial_t \psi = \Delta \psi + \sigma \abs{\psi}^2 \psi. \label{eq:NLSConserve}
\end{align}
As described above, the wave action is conserved, i.e. $\frac{d}{dt}\mathcal{WA}_\lambda[\psi] = 0$.
The quantity $\Pi_{WA}[\psi](N)$, satisfies
\begin{align*}
\frac{d}{dt}\mathcal{WA}_\lambda[P_{\geq N}\psi] = \sigma \Pi_{WA}[\psi](N), 
\end{align*}
and it can be interpreted as a `wave action flux through scale $N$' (in particular, a transfer of wave-action from smaller scales to larger scales).
Similarly, the Hamiltonian flux satisfies
\begin{align*}
\frac{d}{dt}\mathcal{WA}_\lambda[P_{\leq N}\psi] = \sigma \Pi_{\mathcal{H}}[\psi](N), 
\end{align*}
and so can be interpted as a `Hamiltonian flux through scale $N$' (in particular, a transfer of Hamiltonian energy from larger scales to smaller scales).
The quantity $\Pi_{KE}$ satisfies
\begin{align*}
\frac{d}{dt}\norm{\grad \psi}_{L^2_\lambda}^2 = \sigma \Pi_{KE}[\psi](N), 
\end{align*}
however, it should not be interpreted as a flux as the kinetic energy is \emph{not} a conservation law of \eqref{eq:NLSConserve}. 

If wave action is to be transferred to large scales and dissipated, then we expect that in the large-scale inertial range $N_F \ll N \ll 1$ (where $N_F = N_F(\nu)$ is a scale at which the low-frequency damping from $D$ dominates) the following holds:
\begin{align*}
\frac{1}{\sigma}\EE \Pi_{WA}(N) \approx \frac{\nu}{\sigma^2}\EE \norm{D^{1/2} u}_{L^2_\lambda}^2 \approx \eps_{wa}, 
\end{align*}
whereas in the small-scale inertial range $1 \ll N \ll N_D$ (where $N_D = N_D(\nu)$ is a scale at which the high-frequency dissipation from $D$ dominates) that the following holds: 
\begin{align*}
\frac{1}{\sigma}\EE \Pi_{\mathcal{H}}(N) \approx \frac{1}{\sigma}\Pi_{KE}(N) \approx \frac{\nu}{\sigma^2} \EE \norm{\grad D^{1/2} u}_{L^2_\lambda}^2 \approx \eps_{ke}. 
\end{align*}
Basic sufficient conditions for these identities to hold are the main content of Theorems \ref{thm:Direct} and \ref{thm:Inverse} below.

Let us briefly discuss our scaling choices before we state the theorem as it is more delicate than the analogous choices in fully developed hydrodynamic turbulence \cite{BCZPSW19,BCZPSW20,Ethan23}.
If the driving is too weak, then naturally one would not see turbulence.
For this we impose the condition
\begin{align*}
\lim_{\nu,\sigma \to 0}\frac{\nu}{\sigma^2} = 0; 
\end{align*}
to make this concept precise we take two sequences $\set{\nu_k}_{k \in \mathbb N}, \set{\sigma_k}_{k \in \mathbb N}$ and we impose
\begin{align*}
\lim_{k \to \infty} \nu_k = 0, \quad \lim_{k \to \infty} \sigma_k = 0, \quad \lim_{k \to 0}\frac{\nu_k}{\sigma^2_k} = 0. 
\end{align*}
The limit $\sigma_k \to 0$ is what implies the nonlinearity is weak.
This driving ensures that all sufficiently strong norms are unbounded, see for example \eqref{eq:WABal} above -- it is expected that sufficiently weak norms remain uniformly bounded, but this is wide open and is generally much stronger than what is necessary to prove Theorems \ref{thm:Inverse} and \ref{thm:Direct}.
Note that the fluctuation dissipation regime, $\nu \approx \sigma^2$ is not included.
It is unclear what should happen in this borderline case; however Sections \ref{sec:NA} and \ref{sec:PDA} do include this case.

Now, let us state theorems that give necessary and sufficient conditions for cascade flux balance laws corresponding to a ``full'' dissipation anomaly under a condition of strong driving and weak nonlinearity.
See \cite{BCZPSW20,Ethan23} for analogues of this result for the stochastically-forced 2D Navier-Stokes and \cite{BCZPSW19,Ethan23} for 3D Navier-Stokes.
Note in those equations there is only a strong turbulence regime and so the conditions are simpler to interpret in that case.
Furthemore, note that the case of 2D NSE is more analogous to the case of \eqref{def:NLS} as it has a dual cascade with two exact quadratic conservation laws. 

The first result shows that the most natural condition -- the vanishing of wave-action dissipation at finite scales -- is equivalent to the existence of an inertial range over which the nonlinear wave action flux exactly balances the input wave-action.
\begin{theorem}[Inverse cascade] \label{thm:Inverse}
Let $\mu_{\lambda,\sigma,\nu}$ be a family of invariant measures of the NLS \eqref{def:NLS} for a sequence of parameters $(\lambda_k,\sigma_k,\nu_k) \to (\infty,0,0)$. Suppose that the external forcing satisfies Assumption \ref{Ass:g}. 
Suppose that the strong driving condition holds:
\begin{align*}
\lim_{k \to \infty}\frac{\nu_k}{\sigma^2_k} = 0. 
\end{align*}
Then the following are equivalent:
\begin{itemize}
\item[(i)] $\forall N_0 > 0$ fixed there holds
\begin{align}
\lim_{k \to \infty}\frac{\nu_k}{\sigma^2_k} \EE \norm{D^{1/2} P_{\geq N_0} u}_{L^2_{\lambda_k}}^2 = 0. \label{eq:ADWA}
\end{align}
\item[(ii)] For all $k \geq 1$, $\exists N_{F}= N_F(k)$ with $\lim_{k \to \infty} N_F = 0$ such that
\begin{align}
\lim_{N_0 \to 0} \limsup_{k \to \infty} \sup_{N \in (N_F,N_0)} \abs{\frac{1}{\sigma_k}\EE \Pi_{WA}(N) - \eps_{wa}} = 0; \label{eq:PiWA}
\end{align}
\end{itemize}
\end{theorem}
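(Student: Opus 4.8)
The plan is to derive a frequency-localized version of the wave-action balance \eqref{eq:WABal} and then check that \eqref{eq:ADWA} and \eqref{eq:PiWA} are both equivalent to the vanishing of a single monotone quantity; once the localized balance is in hand, the equivalence is soft. First I would apply the It\^o formula to $\tfrac12\norm{P_{\geq N}u}_{L^2_\lambda}^2$ along solutions of \eqref{def:NLS}. Since $P_{\geq N}$ is a real, self-adjoint Fourier multiplier commuting with $\Delta$ and $D$, the dispersive term contributes nothing to the drift and one obtains
\begin{align*}
\dee\Big(\tfrac12\norm{P_{\geq N}u}_{L^2_\lambda}^2\Big) = \Big[-\sigma\Pi_{WA}(N) - \nu\norm{D^{1/2}P_{\geq N}u}_{L^2_\lambda}^2 + \tfrac{\sigma^2}{2}\sum_j\fint_{\mathbb T^d_\lambda}\abs{P_{\geq N}g^\lambda_j}^2\dx\Big]\dt + \dee M_t,
\end{align*}
with $M_t$ a (mean-zero) martingale. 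Taking expectations under an invariant measure $\mu$ and using stationarity, then dividing by $\sigma^2$, yields for every $N>0$
\begin{align*}
\frac1\sigma\EE\Pi_{WA}(N) = \eps_{wa}^{\geq N} - \frac{\nu}{\sigma^2}\EE\norm{D^{1/2}P_{\geq N}u}_{L^2_\lambda}^2, \qquad \eps_{wa}^{\geq N} := \tfrac12\sum_j\fint_{\mathbb T^d_\lambda}\abs{P_{\geq N}g^\lambda_j}^2\dx.
\end{align*}
By Assumption \ref{Ass:g} the $g^\lambda_j$ have Fourier support bounded away from the origin uniformly in $\lambda$ and $j$, so there is a fixed $N_*>0$ with $P_{\geq N}g^\lambda_j = g^\lambda_j$, hence $\eps_{wa}^{\geq N}=\eps_{wa}$, for all $N\leq N_*$.

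Set $a_k(N):=\frac{\nu_k}{\sigma_k^2}\EE\norm{D^{1/2}P_{\geq N}u}_{L^2_{\lambda_k}}^2\geq0$, so the identity above reads $\big|\tfrac1{\sigma_k}\EE\Pi_{WA}(N)-\eps_{wa}\big| = a_k(N)$ for $N\leq N_*$. Two elementary facts about $a_k$ then finish the proof: \eqref{eq:WABal} gives the uniform bound $a_k(N)\leq\frac{\nu_k}{\sigma_k^2}\EE\norm{D^{1/2}u}_{L^2_{\lambda_k}}^2 = \eps_{wa}$, and, since the Fourier multiplier of $P_{\geq N}$ may be taken of the form $\chi(|k|/N)$ with $\chi:[0,\infty)\to[0,1]$ nondecreasing, $N\mapsto a_k(N)$ is nonincreasing. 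For $(i)\Rightarrow(ii)$: \eqref{eq:ADWA} is exactly $a_k(N_0)\to0$ for each fixed $N_0>0$, so a diagonalization (pick $M_m\downarrow0$ with $M_1<N_*$ and $K_m\uparrow\infty$ with $a_k(M_m)\leq1/m$ for $k\geq K_m$, then set $N_F(k):=M_m$ on $K_m\leq k<K_{m+1}$) gives $N_F(k)\downarrow0$ with $a_k(N_F(k))\to0$; then for fixed $N_0\in(0,N_*)$ and $k$ so large that $N_F(k)<N_0$, monotonicity gives $\sup_{N\in(N_F(k),N_0)}\big|\tfrac1{\sigma_k}\EE\Pi_{WA}(N)-\eps_{wa}\big| = \sup_{N\in(N_F(k),N_0)}a_k(N) \leq a_k(N_F(k)) \to 0$, which is \eqref{eq:PiWA}. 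For $(ii)\Rightarrow(i)$: fix $N_0>0$, and (since $a_k$ is nonincreasing) assume $N_0<N_*$; for any $\tilde N_0\in(0,N_0]$ and $k$ with $N_F(k)<\tilde N_0$, monotonicity gives $a_k(N_0)\leq a_k(N)$ for all $N\in(N_F(k),\tilde N_0)$, so $a_k(N_0)\leq\sup_{N\in(N_F(k),\tilde N_0)}\big|\tfrac1{\sigma_k}\EE\Pi_{WA}(N)-\eps_{wa}\big|$; taking $\limsup_{k\to\infty}$, then $\tilde N_0\to0$, and invoking \eqref{eq:PiWA} forces $\limsup_{k\to\infty}a_k(N_0)=0$, which is \eqref{eq:ADWA}.

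The only genuinely technical point is the rigorous justification of the It\^o step: one needs $\EE_\mu$ of the drift to be finite --- i.e.\ $\EE_\mu\norm{u}_{L^2}^2<\infty$ (immediate from \eqref{eq:WABal}) and $\EE_\mu\big|\mathrm{Im}\fint_{\mathbb T^d_\lambda}\overline{P_{\geq N}u}\,P_{\geq N}(\abs{u}^2u)\dx\big|<\infty$ (controlled by $\EE_\mu\norm{u}_{L^2}\norm{u}_{L^6}^3$) --- and one needs $M_t$ to be a true martingale. Both follow from the smoothness of the stationary measure and the polynomial moment bounds recorded in Theorem \ref{lem:NLSGWP} / \cite{KS04}, or from a standard truncation of the nonlinearity and limiting argument as in the derivation of \eqref{eq:WABal}--\eqref{eq:Hbal}. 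Beyond this (and pinning down the Littlewood--Paley conventions so that $\eps_{wa}^{\geq N}=\eps_{wa}$ for small $N$ and $a_k$ is monotone), I expect nothing delicate: the equivalence is an essentially formal consequence of the localized balance identity.
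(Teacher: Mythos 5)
Your argument is essentially the paper's: the frequency-localized wave-action balance obtained from It\^o's formula, the treatment of the forcing term via the low-frequency assumption, and the diagonalization-plus-monotonicity bookkeeping all match the published proof (the diagonalization step is the paper's Lemma \ref{lem:diag}). The one inaccuracy is your claim that Assumption \ref{Ass:g} forces $P_{\geq N}g_j^\lambda = g_j^\lambda$ for all $N\leq N_*$: the assumption \eqref{ineq:glofreq} only gives $\lim_{N\to 0}\sup_\lambda\sum_j\|P_{\leq N}g_j^\lambda\|_{L^2_\lambda}^2=0$, not exact vanishing of the low-frequency content, so $\eps_{wa}^{\geq N}=\eps_{wa}+o_{N\to 0}(1)$ uniformly in $\lambda$ rather than an exact identity; carrying this $o_{N_0\to 0}(1)$ error through your estimates (which the outer $\lim_{N_0\to 0}$ in \eqref{eq:PiWA} and \eqref{eq:ADWA} is designed to absorb) is exactly what the paper does, and leaves the rest of your argument unchanged.
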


The case of the direct cascade of kinetic energy is more interesting, as some weak nonlinearity assumptions are required for the flux of the Hamiltonian energy and kinetic energy to be asymptotically equal (and balance the input $\eps_{ke}$).  

\begin{theorem}[Direct cascade of kinetic energy]\label{thm:Direct}
Let $\mu_{\lambda,\sigma,\nu}$ be a family of invariant measures of the NLS \eqref{def:NLS} for a sequence of parameters $(\lambda_k,\sigma_k,\nu_k) \to (\infty,0,0)$. Suppose that the external forcing satisfies Assumption \ref{Ass:g}. 
Suppose that the following conditions hold:
\begin{itemize}
\item[(i)] sufficiently strong driving:
\begin{align*}
\lim_{k \to \infty}\frac{\nu_k}{\sigma^2_k} = 0;
\end{align*}
\item[(ii)] weak nonlinearity and vanishing potential energy dissipation: $\forall N_0 > 0$ fixed, there holds 
\begin{align}
\lim_{k \to \infty} \sigma_k \EE \norm{u}_{L^4_{\lambda_k}}^2 & = 0 \label{eq:L2van} \\
\lim_{k\to \infty}\frac{\nu_k}{\sigma_k} \EE \norm{P_{\leq N_0} u}_{L^4_{\lambda_k}}^4 & = 0.  \label{eq:PEDvan}
\end{align}
Then, the following are equivalent
\begin{itemize}
\item[(i)] $\forall N_0 > 0$ fixed there holds
\begin{align}
\lim_{k\to \infty}\frac{\nu_k}{\sigma^2_k} \EE \norm{\grad D^{1/2} P_{\leq N_0} u}_{L^2_{\lambda_k}}^2 & = 0.  \label{eq:ADKE}
\end{align}
\item[(ii)] $\forall k \geq 1$, $\exists N_D(k) $ such that $N_D \to \infty$ as $k \to \infty$ such that 
\begin{align}
\lim_{N_0 \to \infty} \limsup_{k \to \infty} \sup_{N \in (N_0,N_D)} \abs{\frac{1}{\sigma_k}\EE \Pi_{KE}(N) - \eps_{ke}} = 0; \label{eq:PiKE}
\end{align}
and
\begin{align}
\lim_{N_0 \to \infty} \limsup_{k \to \infty} \sup_{N \in (N_0,N_D)} \abs{\frac{1}{\sigma}\EE \Pi_{\mathcal{H}}(N) - \eps_{ke}} = 0. \label{eq:PiH}
\end{align}
\end{itemize} 
\end{itemize} 
\end{theorem}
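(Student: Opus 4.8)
The plan is to run the same dyadic telescoping/summation-by-parts argument that underlies Theorem \ref{thm:Inverse}, but now applied to the kinetic energy balance \eqref{eq:Hbal} rather than the wave-action balance \eqref{eq:WABal}, and with the extra bookkeeping needed to discard the potential-energy contributions. First I would record the ``local'' kinetic energy balance: multiplying \eqref{def:NLS} by $-\Delta \overline{P_{\leq N} u}$, taking real parts, using the Itô formula, and taking expectations against the invariant measure $\mu$, one gets an identity of the schematic form
\begin{align*}
\nu \EE \norm{\grad D^{1/2} P_{\leq N} u}_{L^2_{\lambda}}^2 = \frac{1}{\sigma}\EE \Pi_{KE}(N) + \sigma^2 \eps_{ke}^{\leq N} + (\text{Itô correction at scale } N),
\end{align*}
where $\eps_{ke}^{\leq N}\to \eps_{ke}$ as $N\to\infty$ because the $g_j^\lambda$ are frequency-localized near $1$ uniformly in $\lambda$ (Assumption \ref{Ass:g}), and the Itô correction is $\tfrac{\sigma^3}{2}\sum_j \EE \fint |P_{\leq N}u|^2 |g_j^\lambda|^2$, which is $\lesssim \sigma^3 \EE \norm{u}_{L^4_\lambda}^2 \cdot \sup_j\norm{g_j^\lambda}_{L^4}^2 \lesssim \sigma \cdot (\sigma^2 \EE\norm{u}_{L^4_\lambda}^2)$ and hence $\to 0$ after dividing by $\sigma^2$ once we invoke \eqref{eq:L2van}. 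Dividing through by $\sigma^2$ gives
\begin{align*}
\frac{\nu}{\sigma^2} \EE \norm{\grad D^{1/2} P_{\leq N} u}_{L^2_{\lambda}}^2 = \frac{1}{\sigma}\EE \Pi_{KE}(N) + \eps_{ke} + o(1) \quad (k\to\infty,\ \text{then } N\to\infty),
\end{align*}
so the term $\frac{\nu}{\sigma^2}\EE\norm{\grad D^{1/2}P_{\leq N}u}_{L^2_\lambda}^2$ is exactly the obstruction to $\frac{1}{\sigma}\EE\Pi_{KE}(N)\approx -\eps_{ke}$ (sign conventions to be fixed so the cascade is positive). This already gives the implication from the flux law \eqref{eq:PiKE} to the vanishing of the low-frequency dissipation \eqref{eq:ADKE}: if the flux is asymptotically $\eps_{ke}$ uniformly on $(N_0,N_D)$, then the dissipation term must vanish; conversely, given \eqref{eq:ADKE} (for the fixed cutoff $N_0$, which by monotonicity of $N\mapsto \norm{\grad D^{1/2}P_{\leq N}u}_{L^2_\lambda}$ controls all $N\le N_0$... wait, one needs the opposite monotonicity, so here one uses that \eqref{eq:ADKE} holds for \emph{every} fixed $N_0$ and that $\EE\norm{\grad D^{1/2}P_{\leq N}u}^2$ is nondecreasing in $N$, hence the sup over $N\in(N_0,N_D)$ is the subtle point --- see below), one reads off \eqref{eq:PiKE}.

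The second half, deriving the \emph{Hamiltonian} flux law \eqref{eq:PiH} from \eqref{eq:PiKE}, is where the weak-nonlinearity hypotheses \eqref{eq:L2van}--\eqref{eq:PEDvan} do the real work. By definition $\Pi_{\mathcal H}(N) - \Pi_{KE}(N)$ consists of the two ``potential energy'' pieces: the $\Delta$-term with the cross-correction $P_{\leq N}(|u|^2u) - |P_{\leq N}u|^2 P_{\leq N}u$ folded back in, and the quartic term $\sigma\, \mathrm{Im}\fint P_{\leq N}(|u|^2u)\,|P_{\leq N}u|^2 P_{\leq N}u$. The quartic term is immediately $O(\sigma \EE\norm{u}_{L^4_\lambda}^4)$... but that is not obviously small; the right way is to bound it by $\sigma\EE\norm{u}_{L^4_\lambda}^2\cdot\norm{P_{\leq N}u}_{L^\infty}^2$ or, better, to note it equals (up to the imaginary part) a real quantity and hence is identically zero, or is controlled after integrating against the equation --- I would instead extract it from \eqref{eq:Hbal} directly, where the quartic flux appears as $\nu\sigma\EE\norm{u}_{L^4_\lambda}^4$, which divided by $\sigma^2$ is $\frac{\nu}{\sigma}\EE\norm{u}_{L^4_\lambda}^4\to 0$ by \eqref{eq:PEDvan} (this is precisely what that hypothesis is for). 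For the difference of the two $\Delta$-terms one uses that the cross-correction $P_{\leq N}(|u|^2u) - |u_{\leq N}|^2 u_{\leq N}$ only involves interactions with at least one high frequency, so pairing it with $\Delta u_{\leq N}$ and using Bernstein ($\norm{\Delta u_{\leq N}}\lesssim N^2\norm{u_{\leq N}}$, etc.) together with \eqref{eq:L2van} and \eqref{eq:PEDvan} shows it is $o(\sigma)$ uniformly on the relevant range $(N_0,N_D)$; combined with \eqref{eq:PiKE} this yields \eqref{eq:PiH}. The reverse implications are obtained by running these estimates backwards.

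I expect the main obstacle to be exactly the uniformity in $N$ over the growing window $(N_0,N_D)$ --- i.e., choosing the dissipation scale $N_D(k)$ correctly and showing $\sup_{N\in(N_0,N_D)}$ of the error terms still vanishes. The point is that $\frac{\nu}{\sigma^2}\EE\norm{\grad D^{1/2}P_{\leq N}u}_{L^2_\lambda}^2$ is nondecreasing in $N$ and converges to $\frac{\nu}{\sigma^2}\EE\norm{\grad D^{1/2}u}_{L^2_\lambda}^2 = \eps_{ke} + (\text{lower order})$ by \eqref{eq:Hbal}, so it is \emph{not} small for $N$ of order the true dissipation scale; one must define $N_D(k)$ as (essentially) the largest $N$ for which this quantity is still below a threshold $\delta_k\to 0$, show $N_D(k)\to\infty$ using hypothesis (i) plus the balance law (a compactness/contradiction argument: if $N_D$ stayed bounded, then along a subsequence the full dissipation would concentrate below a fixed scale, contradicting \eqref{eq:ADKE} for that scale), and then on $(N_0,N_D)$ the monotone quantity is squeezed between its value at $N_0$ (which $\to 0$ by \eqref{eq:ADKE}) and $\delta_k$. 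This is the same mechanism as in \cite{BCZPSW19,BCZPSW20,Ethan23} and in the proof of Theorem \ref{thm:Inverse}, so once that lemma's argument is in hand the adaptation is bookkeeping, but it is the step that requires care with the order of limits ($k\to\infty$ before $N_0\to\infty$, with $N_D$ depending on $k$).
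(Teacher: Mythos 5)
Your proposal is correct and follows essentially the same route as the paper: the Itô balance for $\tfrac12\|\grad u_{\leq N}\|_{L^2}^2$ and for $\mathcal H[P_{\leq N}u]$, the weak-nonlinearity hypotheses \eqref{eq:L2van}--\eqref{eq:PEDvan} to kill the Itô correction and the potential-energy dissipation terms (the latter via Bernstein for the $\tfrac{\nu}{\sigma}\EE\|u_{\leq N}\grad u_{\leq N}\|_{L^2}^2$ piece), and a diagonalization in $(N,k)$ exploiting monotonicity of the truncated dissipation to select $N_D(k)\to\infty$ --- which is exactly the paper's Lemma \ref{lem:diag}, so your ``threshold'' construction of $N_D$ is the step the paper has already packaged for you. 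The only slip is attributing the $u$-dependent Itô correction $\tfrac{\sigma^3}{2}\sum_j\EE\fint|u_{\leq N}|^2|g_j^\lambda|^2$ to the \emph{kinetic} energy balance (it arises only for the Hamiltonian, whose quartic part is non-quadratic), but this is harmless since you bound that term correctly where it actually appears.
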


\begin{remark}
That we take $N_0 \to 0$ in Theorem \ref{thm:Inverse} and $N_0 \to \infty$ in Theorem \ref{thm:Direct} is remove any effects of details of the forcing profies $g^\lambda_j$. For example, they are not assumed to be compactly supported in frequency. 
\end{remark}

Theorem \ref{thm:Direct} does not seem to directly imply the expected balance of kinetic energy dissipation
\begin{align*}
\frac{\nu}{\sigma^2} \EE \norm{\grad D^{1/2} u}_{L^2_\lambda}^2 \approx \eps_{ke}. 
\end{align*}
However, with a slight strengthening of \eqref{eq:PEDvan} we have the following.
\begin{lemma} \label{lem:KEbalance}
Let $\mu_{\lambda,\sigma,\nu}$ be a family of invariant measures of the NLS \eqref{def:NLS} for a sequence of parameters $(\lambda_k,\sigma_k,\nu_k) \to (\infty,0,0)$. Suppose that the external forcing satisfies Assumption \ref{Ass:g}.
If we assume \eqref{eq:L2van} together with 
\begin{align}
\lim_{k \to \infty} \frac{\nu_k}{\sigma_k} \EE \norm{u}_{L^4_{\lambda_k}}^4 & = 0, \label{eq:L4van} \\
\lim_{k \to \infty} \frac{\nu_k}{\sigma_k} \EE \norm{u}_{L^4_{\lambda_k}}^2\norm{\grad u}_{L^4_{\lambda_k}}^2 & = 0, \label{eq:PEkiller}
\end{align}
then there holds 
\begin{align*}
\lim_{k \to \infty} \frac{\nu_k}{\sigma^2_k} \EE \norm{\grad D^{1/2} u}_{L^2_{\lambda_k}}^2 = \eps_{ke}.  
\end{align*}
\end{lemma}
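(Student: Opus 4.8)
The plan is to read the conclusion off directly from the Hamiltonian dissipation balance \eqref{eq:Hbal}, applied to $\mu_{\lambda_k,\sigma_k,\nu_k}$ and divided through by $\sigma_k^2$. This gives the identity
\begin{align*}
\frac{\nu_k}{\sigma_k^2}\EE\norm{\grad D^{1/2}u}_{L^2_{\lambda_k}}^2 = \eps_{ke} + \frac{\sigma_k}{2}\sum_j\EE\fint_{\mathbb T^d_{\lambda_k}}\abs{u}^2\abs{g_j^{\lambda_k}}^2\dx - \frac{\nu_k}{\sigma_k}\EE\norm{u}_{L^4_{\lambda_k}}^4 - \frac{3\nu_k}{\sigma_k}\EE\norm{u\grad u}_{L^2_{\lambda_k}}^2,
\end{align*}
where as usual $\eps_{ke}=\eps_{ke}(\lambda_k)$. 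Thus the entire task reduces to showing that the three trailing terms on the right vanish as $k\to\infty$.

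The second of these is exactly the quantity assumed to vanish in \eqref{eq:L4van}. For the third, I would use the pointwise Cauchy--Schwarz inequality $\norm{u\grad u}_{L^2_\lambda}^2=\fint\abs{u}^2\abs{\grad u}^2\dx\le\norm{u}_{L^4_\lambda}^2\norm{\grad u}_{L^4_\lambda}^2$, so that the term is bounded by $\frac{3\nu_k}{\sigma_k}\EE\big[\norm{u}_{L^4_{\lambda_k}}^2\norm{\grad u}_{L^4_{\lambda_k}}^2\big]$, which tends to $0$ by \eqref{eq:PEkiller}. For the It\^o correction term I would estimate, by Hölder, $\sum_j\fint\abs{u}^2\abs{g_j^\lambda}^2\dx=\fint\abs{u}^2\big(\sum_j\abs{g_j^\lambda}^2\big)\dx\le\norm{u}_{L^4_\lambda}^2\,\norm{\sum_j\abs{g_j^\lambda}^2}_{L^2_\lambda}$, and then invoke Assumption \ref{Ass:g}: the frequency localization at scales $\approx 1$ together with the normalization fixing $\eps_{wa}$ forces $\norm{\sum_j\abs{g_j^\lambda}^2}_{L^2_\lambda}\lesssim 1$ uniformly in $\lambda$ (e.g. via a uniform bound $\sup_x\sum_j\abs{g_j^\lambda(x)}^2\lesssim 1$, using $\norm{\cdot}_{L^2_\lambda}\le\norm{\cdot}_{L^\infty}$). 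Hence $\frac{\sigma_k}{2}\sum_j\EE\fint\abs{u}^2\abs{g_j^{\lambda_k}}^2\dx\lesssim\sigma_k\EE\norm{u}_{L^4_{\lambda_k}}^2\to 0$ by \eqref{eq:L2van}; this is the same estimate already used in the proof of Theorem \ref{thm:Direct} to dispose of the potential-energy input. Combining the three limits with the displayed identity yields $\frac{\nu_k}{\sigma_k^2}\EE\norm{\grad D^{1/2}u}_{L^2_{\lambda_k}}^2\to\eps_{ke}$.

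The only point requiring real care is the It\^o term: one must extract from Assumption \ref{Ass:g} a bound on $\sum_j\abs{g_j^\lambda}^2$ that is uniform in the box size $\lambda$, so that the $\approx\lambda^d$ forcing functions do not accumulate; once that is in hand, everything else is a pure rearrangement of \eqref{eq:Hbal} together with Cauchy--Schwarz and the hypotheses \eqref{eq:L2van}, \eqref{eq:L4van}, \eqref{eq:PEkiller}. Note that the strong-driving condition $\nu_k/\sigma_k^2\to 0$ is not needed explicitly here; it is effectively already encoded in the hypotheses that make these error terms small.
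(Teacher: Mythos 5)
Your proof is correct and follows essentially the same route as the paper: rearrange the Hamiltonian dissipation balance \eqref{eq:Hbal}, dispose of the potential-energy dissipation via \eqref{eq:L4van} and Cauchy--Schwarz plus \eqref{eq:PEkiller}, and dispose of the It\^o correction via \eqref{eq:L2van}. The only cosmetic difference is the It\^o term: the paper applies H\"older term-by-term to get $\sigma\,\EE\norm{u}_{L^4_\lambda}^2\sum_j\norm{g_j^\lambda}_{L^4_\lambda}^2$ and invokes \eqref{ineq:gL4} directly, whereas the bound $\norm{\sum_j\abs{g_j^\lambda}^2}_{L^2_\lambda}\lesssim 1$ that you flag as the ``point requiring real care'' needs no $L^\infty$ control at all --- it follows from \eqref{ineq:gL4} by the triangle inequality, since $\norm{\sum_j\abs{g_j^\lambda}^2}_{L^2_\lambda}\le\sum_j\norm{g_j^\lambda}_{L^4_\lambda}^2$.
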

All of the results in this note are some variation of the following basic flux balances, which can be simply interpreted as saying that the nonlinearity must dissipate whatever the dissipation does not.
The proof of this proposition is evident from the proofs of Theorems \ref{thm:Inverse} and \ref{thm:Direct}. 
\begin{proposition}
Let $\mu_{\lambda,\sigma,\nu}$ be a family of invariant measures of the NLS \eqref{def:NLS} for a sequence of parameters $(\lambda_k,\sigma_k,\nu_k) \to (\infty,0,0)$. Suppose that the external forcing satisfies Assumption \ref{Ass:g}. 
Then $\forall N \leq 1$ there holds
\begin{align*}
\limsup_{k \to \infty} \abs{\frac{1}{\sigma_k}\Pi_{WA}(N) - \frac{\nu_k}{\sigma_k^2} \EE \norm{D^{1/2} u_{< N}}_{L^2_{\lambda_k}}^2 } = o_{N \to 0}(1). 
\end{align*}
If we further assume the weak nonlinearity assumptions \eqref{eq:L2van}, \eqref{eq:L4van}, and \eqref{eq:PEkiller}, then $\forall N \geq 1$ there holds
\begin{align*}
\limsup_{k \to \infty} \abs{\frac{1}{\sigma_k}\Pi_{KE}(N) - \frac{\nu_k}{\sigma_k^2} \EE \norm{\grad D^{1/2} u_{> N}}_{L^2_{\lambda_k}}^2 } = o_{N \to \infty}(1), 
\end{align*}
and
\begin{align*}
\limsup_{k \to \infty} \abs{\frac{1}{\sigma_k}\Pi_{\mathcal{H}}(N) - \frac{\nu_k}{\sigma_k^2} \EE \norm{\grad D^{1/2} u_{> N}}_{L^2_{\lambda_k}}^2 } = o_{N \to \infty}(1).
\end{align*}
\end{proposition}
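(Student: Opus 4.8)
The plan is to obtain all three estimates from one mechanism: in a stationary state the expected time derivative of a sufficiently integrable functional of $u$ vanishes, so applying It\^o's formula to a \emph{frequency-truncated} version of one of the two conserved quantities of \eqref{def:NLS} produces a balance in which the skew-adjoint dispersive term $\Delta u$ drops out, the cubic term reproduces --- up to the factor $\sigma$ --- the relevant Littlewood--Paley flux (this being exactly how $\Pi_{WA}$, $\Pi_{KE}$, $\Pi_{\mathcal H}$ were defined), the damping produces $-\nu$ times the corresponding truncated dissipation, and the noise It\^o correction produces $\sigma^2$ times a quantity localized in frequency near the forcing scale $\approx 1$. (I read $\Pi_\bullet(N)$ as $\EE\Pi_\bullet(N)$ throughout, write $\langle f,g\rangle_{L^2_\lambda}:=\mathrm{Re}\fint_{\mathbb T^d_\lambda}\bar f g\,\dx$, and take for granted that the smoothness and moment bounds on $\mu_{\lambda,\sigma,\nu}$ from \cite{KS04}, with \eqref{eq:WABal}--\eqref{eq:Hbal}, justify It\^o's formula on these unbounded functionals and the vanishing of the expectation of the resulting martingale.) For the wave action ($N\le 1$) I would apply It\^o to $u\mapsto\tfrac12\norm{u_{<N}}_{L^2_\lambda}^2$: using that $P_{<N}$ is a self-adjoint projection commuting with $\Delta$ and that $\mathrm{Im}\fint_{\mathbb T^d_\lambda}\bar u\,\abs{u}^2 u\,\dx=0$, the stationary balance reads
\begin{align*}
\sigma\,\EE\Pi_{WA}(N)-\nu\,\EE\norm{D^{1/2}u_{<N}}_{L^2_\lambda}^2+\frac{\sigma^2}{2}\sum_j\norm{P_{<N}g_j^\lambda}_{L^2_\lambda}^2=0,
\end{align*}
and dividing by $\sigma_k^2$ gives the first claim, since by Assumption \ref{Ass:g} $\sum_j\norm{P_{<N}g_j^\lambda}_{L^2_\lambda}^2\to 0$ as $N\to 0$ uniformly in $k$.

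For the kinetic energy ($N\ge 1$) I would apply It\^o to $u\mapsto\tfrac12\norm{\grad u_{>N}}_{L^2_\lambda}^2$. The damping contributes $-\nu\norm{\grad D^{1/2}u_{>N}}_{L^2_\lambda}^2$ (using $\norm{\grad D^{1/2}w}_{L^2_\lambda}^2=\norm{\grad w}_{L^2_\lambda}^2+\norm{\Delta w}_{L^2_\lambda}^2$), the It\^o correction is $\tfrac{\sigma^2}{2}\sum_j\norm{\grad P_{>N}g_j^\lambda}_{L^2_\lambda}^2=\sigma^2\,o_{N\to\infty}(1)$ uniformly in $k$, and the cubic term contributes $\sigma\,\mathrm{Im}\fint_{\mathbb T^d_\lambda}\Delta\overline{u_{>N}}\,(\abs{u}^2 u)\,\dx=\sigma\big(\Pi_{KE}(N)+\mathcal P(u)\big)$, where the potential-energy coupling is $\mathcal P(u):=\mathrm{Im}\fint_{\mathbb T^d_\lambda}\Delta\bar u\,(\abs{u}^2 u)\,\dx$. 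The stationary balance therefore reads
\begin{align*}
\frac1\sigma\EE\Pi_{KE}(N)-\frac{\nu}{\sigma^2}\EE\norm{\grad D^{1/2}u_{>N}}_{L^2_\lambda}^2=-\frac1\sigma\EE\mathcal P(u)-\frac12\sum_j\norm{\grad P_{>N}g_j^\lambda}_{L^2_\lambda}^2,
\end{align*}
so, after taking $\limsup_{k\to\infty}$ and then $N\to\infty$, everything reduces to proving $\tfrac1{\sigma_k}\EE\mathcal P(u)\to 0$.

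This is the heart of the matter: the pointwise bound $\tfrac1\sigma\EE\abs{\mathcal P(u)}\lesssim\tfrac1\sigma\EE\big(\norm{u}_{L^4_\lambda}^2\norm{\grad u}_{L^4_\lambda}^2\big)$ is too weak, being short the factor $\nu/\sigma$ carried by \eqref{eq:PEkiller}. The remedy is a \emph{second} stationarity identity: applying It\^o to the potential energy $u\mapsto\tfrac14\sigma\norm{u}_{L^4_\lambda}^4$, the cubic term contributes exactly $\sigma\mathcal P(u)$ to its drift (the quintic self-interaction drops since $\mathrm{Im}\fint_{\mathbb T^d_\lambda}\abs{u}^6\,\dx=0$), the damping contributes $-\nu\sigma\,\langle\abs{u}^2 u,Du\rangle_{L^2_\lambda}$, and the It\^o correction is $O\!\big(\sigma^3\norm{u}_{L^2_\lambda}^2\big)$, whence in the stationary state
\begin{align*}
\EE\mathcal P(u)=\nu\,\EE\langle\abs{u}^2 u,Du\rangle_{L^2_\lambda}+O\!\big(\sigma^2\,\EE\norm{u}_{L^2_\lambda}^2\big).
\end{align*}
Since $\langle\abs{u}^2 u,Du\rangle_{L^2_\lambda}\lesssim\norm{u}_{L^4_\lambda}^4+\norm{u}_{L^4_\lambda}^2\norm{\grad u}_{L^4_\lambda}^2$ and $\norm{u}_{L^2_\lambda}\le\norm{u}_{L^4_\lambda}$, the hypotheses \eqref{eq:L2van}, \eqref{eq:L4van}, \eqref{eq:PEkiller} force $\tfrac1{\sigma_k}\EE\abs{\mathcal P(u)}\to 0$, completing the kinetic-energy estimate. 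For the Hamiltonian ($N\ge 1$), one checks from the definitions that $\Pi_{\mathcal H}(N)=\Pi_{KE}(N)+R_N(u)$, where $R_N(u):=\mathcal P(u_{\le N})+\sigma\,\mathrm{Im}\fint_{\mathbb T^d_\lambda}P_{\le N}(\abs{u}^2 u)\,\abs{u_{\le N}}^2 u_{\le N}\,\dx$ collects the purely low-frequency potential-energy terms --- up to the sign conventions, $\sigma R_N(u)$ is the drift contribution of the nonlinearity and of the dispersive term to $\tfrac14\sigma\norm{u_{\le N}}_{L^4_\lambda}^4$. Running the same stationarity argument on that functional bounds $\tfrac1{\sigma_k}\EE\abs{R_N(u)}$ by an $N$-dependent multiple of $\tfrac{\nu_k}{\sigma_k}\EE\norm{u}_{L^4_\lambda}^4$ plus $\sigma_k\EE\norm{u}_{L^4_\lambda}^2$ (Bernstein's inequality absorbs the frequencies $\le N$), so $\tfrac1{\sigma_k}\EE\abs{R_N(u)}\to 0$ as $k\to\infty$ for each fixed $N$ by \eqref{eq:L4van} and \eqref{eq:L2van}; combined with the kinetic-energy estimate this yields the third claim.

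The step I expect to be the main obstacle is exactly this control of $\tfrac1{\sigma_k}\EE\mathcal P(u)$ and $\tfrac1{\sigma_k}\EE R_N(u)$: a pointwise estimate loses the decisive factor $\nu/\sigma$, and one must instead exploit that these potential-energy quantities satisfy their own stationarity balance, which restores it --- and this is where the weak-nonlinearity hypotheses \eqref{eq:L2van}--\eqref{eq:PEkiller} genuinely enter. Everything else --- the It\^o expansions, the identification of the cubic terms with $\Pi_{WA}$, $\Pi_{KE}$, $\Pi_{\mathcal H}$, and the martingale cancellations on unbounded functionals --- is routine given \cite{KS04} and the balances \eqref{eq:WABal}--\eqref{eq:Hbal}.
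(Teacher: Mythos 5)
Your argument is correct and lands on the same three identities the paper obtains, but it is organized differently, and the difference is worth recording. The paper gives no standalone proof: it asserts the proposition follows from the proofs of Theorems \ref{thm:Inverse} and \ref{thm:Direct}, where the stationary It\^o balances are always taken for \emph{low-frequency} (resp.\ high-frequency) truncations of the conserved quantities --- \eqref{eq:WAFB} for $\tfrac12\norm{u_{\geq N}}_{L^2_\lambda}^2$ and \eqref{eq:HFluxBal} for $\mathcal H[u_{\leq N}]$ --- and the passage from the truncated dissipation on one side of the forcing scale to its complement is made by subtracting the \emph{global} balances \eqref{eq:WABal} and, crucially, Lemma \ref{lem:KEbalance} (itself a consequence of the global Hamiltonian balance \eqref{eq:Hbal} under \eqref{eq:L2van}, \eqref{eq:L4van}, \eqref{eq:PEkiller}). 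You instead apply It\^o directly to $\tfrac12\norm{\grad u_{>N}}_{L^2_\lambda}^2$, which produces the high-frequency dissipation immediately but forces you to confront the kinetic-to-potential transfer term $\mathcal P(u)=\mathrm{Im}\fint\Delta\bar u\,\abs{u}^2u\,\dx$; your key observation --- that a pointwise bound on $\tfrac1\sigma\EE\mathcal P(u)$ loses the factor $\nu/\sigma$ and that one must instead invoke the stationarity of the potential energy $\tfrac{\sigma}{4}\norm{u}_{L^4_\lambda}^4$ to trade $\EE\mathcal P(u)$ for $\nu\,\EE\brak{\abs{u}^2u,Du}$ plus an It\^o correction --- is exactly the content of Lemma \ref{lem:KEbalance} in disguise, since \eqref{eq:Hbal} is the sum of the kinetic and potential balances and $\mathcal P(u)$ is the cross term that cancels in that sum. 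Your route makes the role of the potential-energy balance explicit and isolates precisely where \eqref{eq:L4van} and \eqref{eq:PEkiller} enter; the paper's route avoids ever writing $\mathcal P(u)$ by only truncating quantities that are exactly conserved. Two small corrections: in the potential-energy identity, $\sigma\mathcal P(u)$ arises from the \emph{dispersive} term $\Delta u$ paired with $\abs{u}^2\bar u$ (the cubic term genuinely contributes $\sigma^2\,\mathrm{Im}\fint\abs{u}^6=0$), not from the cubic term as you wrote; and the It\^o correction should be estimated by H\"older as $O(\sigma^3\,\EE\norm{u}_{L^4_\lambda}^2\sum_j\norm{g_j^\lambda}_{L^4_\lambda}^2)$ using \eqref{ineq:gL4}, since $\sum_j\norm{g_j^\lambda}_{L^\infty}^2$ is not assumed summable --- your subsequent use of \eqref{eq:L2van} then goes through unchanged. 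Like the paper, you treat the Littlewood--Paley cutoffs as if they were orthogonal projections when splitting $\norm{D^{1/2}u}_{L^2_\lambda}^2$ and the cubic pairing into low- and high-frequency pieces; the cross terms live on an $O(1)$ band of frequencies and are absorbed into the $o(1)$ errors in the same (implicit) way the paper handles them, so this is not a gap relative to the paper's own standard of rigor.
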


\begin{remark}
Note that the following a priori estimates 
\begin{align}
\lim_{k \to \infty}\frac{\nu_k}{\sigma^2_k} \EE \norm{D^{1/2} P_{\geq 1} u}_{L^2_{\lambda_k}}^2 & = 0  \label{eq:ADWAeasy} \\ 
\lim_{k \to \infty}\frac{\nu_k}{\sigma^2_k} \EE \norm{\grad D^{1/2} P_{\leq 1} u}_{L^2_{\lambda_k}}^2 & = 0,  \label{eq:ADKEeasy}
\end{align}
together imply \eqref{eq:ADWA} and \eqref{eq:ADKE}.
\end{remark}

\begin{remark}
Condition (ii) in Theorem \ref{thm:Direct} can be interpreted as the the vanishing potential energy input and dissipation.
The proof will clarify that the latter two conditions are claiming an inertial and integral range over which the potential energy dissipation vanishes whereas the first condition implies that the potential energy input will vanish (and hence the Hamiltonian energy is primarily dominated by the kinetic energy). 
\end{remark}

\subsection{Preliminaries}
For the forcing, we make the following frequency localization assumptions.
\begin{assumption} \label{Ass:g}
We assume that the family of forcing profiles $\set{g_j^\lambda}_{\lambda \geq 1, j \geq 1}$ satisfy the following: 
\begin{align}
&\frac{1}{2}\sum_j \norm{g_j^\lambda}_{L^2_\lambda}^2 := \eps_{wa} \approx 1 \\
&\frac{1}{2}\sum_j \norm{\grad g_j^\lambda}_{L^2_\lambda}^2 := \eps_{ke} \approx 1 \\
&\sup_{\lambda \in (1,\infty)} \sum_j\left(\norm{g_j^\lambda}_{L^4_\lambda}^2 + \norm{\Delta g_j^\lambda}_{L^2_\lambda}^2\right) <\infty \label{ineq:gL4}\\ 
&\lim_{N \to 0}  \sup_{\lambda \in (1,\infty)} \sum_j \norm{ P_{\leq N} g_j^\lambda}_{L^2_{\lambda}}^2 = 0 \label{ineq:glofreq} \\
&\lim_{N \to \infty}  \sup_{\lambda \in (1,\infty)} \sum_j \norm{ \grad P_{\geq N} g_j^\lambda}_{L^2_{\lambda}}^2= 0, \label{ineq:ghifreq}
\end{align}
\end{assumption} 
Roughly speaking, the latter three assumptions ensure that the forcing is concentrated at frequencies $\approx 1$.
We did not attempt to find the weakest possible conditions.
\begin{example}[Examples of admissible forcing]
It is simpler to construct examples of this forcing if we restrict to $\lambda \in \mathbb N$ and parameterize $\mathbb T^2$ as $[0,2\pi\lambda)^2$. 
For example one can choose the following frequency-localized forcing
\begin{align*}
\sum_{j \in \mathbb Z^d: 1 \leq \abs{j} \leq J_0}  q_j e^{i j\cdot x } dW^{(j)}_t, 
\end{align*}
for some fixed constants $q_j \in \mathbb C$ and $J_0 \geq 1$. In this case, we do not need to index the Brownian motions by $\lambda$.
This forcing satisfies Assumption \ref{Ass:g}, however, it will almost certainly not lead to a unique invariant measure in general. This is due to the fact that the forcing is $2\pi$-periodic in space while the domain is $2\pi \lambda$ periodic.
It may be possible to prove that there is a unique invariant measure with non-trivial support in all Fourier modes, however, no result of this type has appeared in the literature for any similar problem at the time of writing. 
With this in mind, it would be more natural to choose a forcing which does not have any long-range correlations. 
One simple and explicit way to do this is by the following localization method; we show the construction in $d=2$ for simplicity but one can do analogously in any dimension. 
Denote for $j \in \mathbb Z^2$ with $0 \leq j_\ell \leq \lambda-1$ the cube
\begin{align*}
Q_{j} = [\frac{j_1}{2\pi},\frac{j_1+1}{2\pi}) \times [\frac{j_2}{2\pi},\frac{j_2 + 1}{2\pi}).
\end{align*}
Denote $x_j$ the center of the cube
\begin{align*}
x_j := (\frac{j_1 + 1/2}{2\pi}, \frac{j_2 + 1/2}{2\pi}). 
\end{align*}
Let $\chi$ be a radially symmetric, smooth cutoff which satisfies  $\chi \equiv 1$ on $Q_{(1,1)}$ and vanishes outside of the support of $\cup Q_{1\pm 1,1\pm1}$. 
Then, consider forcing of the form:  
\begin{align*}
\sum_{j \in \mathbb Z^2: 0 < \abs{j}_{\ell^\infty} \leq \lambda - 1} \quad \sum_{\ell \in \mathbb Z^2: 0 < \abs{\ell} \leq \ell_0 }q_{\ell} \chi \left(x - j \right) e^{i \ell \cdot (x-x_j) } dW_t^{(j,\ell)},
\end{align*}
where note that we are indexing the Brownian motions over both $j$ and $\ell$. Here $\ell_0$ is fixed finite; if one  wants to consider forcing which is rougher in space, one could take $\ell_0 = \infty$ and impose a decay condition on $q_\ell$ to obtain whichever desired regularity class.  The forcing profiles would be $g_{(j,\ell)}^\lambda = q_{\ell} \chi \left(x - j \right) e^{i \ell \cdot (x-x_j) }$. While the Brownian motions and profiles do not depend on $\lambda$ per se, the range that the first index, $j$, runs over is $\mathbb Z^2$ with $ 0 < \abs{j}_{\ell^\infty} \leq \lambda-1$, and so the form of the forcing does vary in $\lambda$.
While there are no existing results of this type in the literature, we believe it might be possible to prove that there is a unique invariant measure for all $\lambda$ if one chooses this forcing (assuming $\ell_0 > 2$) suitably non-degenerate; for example, a variant of \cite{HM11} suitably adapted to the energy structure of nonlinear Schr\"odinger may be able to do this.  
\end{example}

It is classical to obtain almost-sure global well-posedness of the SPDE defined by \eqref{def:NLS}.
\begin{definition} \label{def:MildFormNLS}
We say an $\mathcal{F}_t$-adapted Markov process $u(t) \in C([0,T];H^1) \cap L^2(0,T;H^2)$ for some $0 < T < \infty$ solves \eqref{def:NLS} in the mild form if
\begin{align*}
u(t) = e^{-t(\nu D + i\Delta)}u_0 + \Gamma_t + i\sigma\int_0^t e^{-(t-s)(\nu D + i\Delta)}\abs{u}^2 u(s) \dee s, 
\end{align*}
where $\Gamma_t$ is given by the stochastic convolution 
\begin{align*}
\Gamma_t = \sigma\sum_j \int_0^t e^{-(t-\tau)(\nu D + i\Delta)}g_j^\lambda \dee W_\tau^{(j)}.  
\end{align*}
\end{definition}

The following theorem is by-now classical (see e.g. \cite{KS04} and the references therein). 
\begin{theorem}[See e.g. \cite{KS04}] \label{lem:NLSGWP}
Suppose $\lambda,\nu,\sigma> 0$ are fixed. 
Let $u_0$ be a $\mathcal{F}_0$-measurable random variable with values in $H^1$ independent from $\set{W_t^{(j,\lambda)}}$.
Then, for $\PP$-almost every $\omega \in \Omega$, there exists a unique $\mathcal{F}_t$-adapted Markov process $u(t)$ which satisfies $\forall T <\infty$, $u \in C([0,T];H^1) \cap L^2(0,T;H^2)$ and which solves \eqref{def:NLS} in the sense of Definition \ref{def:MildFormNLS}. 
Furthermore, the solutions are associated with a Feller Markov semigroup $\mathcal{P}_t$ and there exists at least one stationary measure $\mu$  (i.e. $\mathcal{P}_t^\ast \mu = \mu$) which is supported on $C^\infty$. 
\end{theorem}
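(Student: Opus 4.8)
The plan is to follow the standard construction for a semilinear SPDE with a dissipative--dispersive linear part; the result is essentially \cite{KS04}, so I will only indicate the structure and the two features of \eqref{def:NLS} that must be watched. In Fourier, $A := \nu D + i\Delta$ acts as multiplication by $\nu(1+\abs{k}^2) - i\abs{k}^2$, so $e^{-tA}$ is an exponentially contractive analytic semigroup on every $H^s$ obeying the parabolic smoothing $\norm{e^{-tA}f}_{H^{s+\theta}} \lesssim (\nu t)^{-\theta/2}\norm{f}_{H^s}$ for $\theta\in[0,2)$ (integrable singularity at $t=0$), together with the maximal-regularity bound $\int_0^t e^{-(t-\tau)A}F(\tau)\,d\tau \in L^2(0,T;H^{s+2})\cap C([0,T];H^{s+1})$ for $F\in L^2(0,T;H^s)$. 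The two nonlinear facts are $\norm{\abs{u}^2 u}_{L^2_\lambda} \lesssim \norm{u}_{L^6_\lambda}^3 \lesssim \norm{u}_{H^1_\lambda}^3$ in $d\le 3$, and $\norm{D^{1/2}u}_{L^2_\lambda}^2 = \norm{u}_{L^2_\lambda}^2 + \norm{\grad u}_{L^2_\lambda}^2 = \norm{u}_{H^1_\lambda}^2$, so that the damping is coercive at the $H^1$ level.

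First I would establish local well-posedness by a Banach fixed point in $X_T := C([0,T];H^1_\lambda)\cap L^2(0,T;H^2_\lambda)$ applied to the mild formulation of Definition \ref{def:MildFormNLS}: the stochastic convolution $\Gamma$ is controlled by the factorization method and Assumption \ref{Ass:g} (which gives $\sum_j\norm{g_j^\lambda}_{H^2_\lambda}^2 <\infty$, hence $\Gamma\in C([0,T];H^2_\lambda)$ $\PP$-a.s.), and the estimates above close the contraction for $T$ small depending on $\norm{u_0}_{H^1_\lambda}$ and $\norm{\Gamma}_{X_T}$. This produces, for $\PP$-a.e.\ $\omega$, a unique local solution on $[0,T_\ast(\omega))$; Lipschitz dependence of the fixed point on $u_0\in H^1_\lambda$, with a standard stopping-time/Gronwall argument, yields the Feller property of $\mathcal P_t$ on $C_b(H^1_\lambda)$.

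Next I would obtain global existence and uniform-in-time moment bounds by applying It\^o to $\mathcal{WA}_\lambda$ and $\mathcal H_\lambda$ on Galerkin truncations (so that It\^o's formula is literal) and passing to the limit; these reproduce the pathwise forms of \eqref{eq:WABal}--\eqref{eq:Hbal}, each with a nonnegative dissipation integral on the left and a martingale on the right. Since $\norm{D^{1/2}u}_{L^2_\lambda}^2 \ge \norm{u}_{L^2_\lambda}^2$, the wave-action balance gives $\tfrac{d}{dt}\EE\norm{u}_{L^2_\lambda}^2 \le 2\sigma^2\eps_{wa} - 2\nu\EE\norm{u}_{L^2_\lambda}^2$, hence $\sup_{t\ge 0}\EE\norm{u(t)}_{L^2_\lambda}^2 < \infty$ for fixed $\lambda,\nu,\sigma$; inserting this into the Hamiltonian balance --- here the defocusing structure is essential, since every term of $\mathcal H_\lambda$ and of the dissipation in \eqref{eq:Hbal} is nonnegative, while the It\^o-correction term $\tfrac{\sigma^3}{2}\sum_j\EE\fint_{\T^d_\lambda}\abs{u}^2\abs{g_j^\lambda}^2$ is dominated via \eqref{ineq:gL4} by $\big(\sum_j\norm{g_j^\lambda}_{L^4_\lambda}^2\big)\EE\norm{u}_{L^4_\lambda}^2$ and hence by the $L^2$-bound --- yields $\sup_{t\ge 0}\EE\mathcal H_\lambda[u(t)] < \infty$, so $\sup_{t\ge 0}\EE\norm{u(t)}_{H^1_\lambda}^2 < \infty$, and with Burkholder also $\EE\sup_{[0,T]}\norm{u}_{H^1_\lambda}^2 < \infty$. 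Together with the lower bound on $T_\ast$ in terms of $\norm{u_0}_{H^1_\lambda}$, this rules out blow-up and makes the solutions global in $C([0,T];H^1_\lambda)\cap L^2(0,T;H^2_\lambda)$ for every $T$; moreover integrating the Hamiltonian balance gives $\nu\int_0^t\EE\norm{u(s)}_{H^2_\lambda}^2\,ds = O(t)$ uniformly in $t$.

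Finally I would obtain the stationary measure by Krylov--Bogoliubov: the time averages $\mu_T := \tfrac1T\int_0^T\mathcal P_t^\ast\delta_0\,dt$ satisfy $\int\norm{u}_{H^1_\lambda}^2\,d\mu_T \le C$ uniformly in $T$, so, bounded sets of $H^1_\lambda$ being precompact in $H^{1-\eps}_\lambda$ on the torus, $\{\mu_T\}$ is tight; any weak limit $\mu$ is $\mathcal P_t$-invariant by the Feller property and, by lower semicontinuity of $\norm{\cdot}_{H^1_\lambda}$, has $\mu(H^1_\lambda)=1$, and likewise $\int\norm{u}_{H^2_\lambda}^2\,d\mu_T = O(1)$ gives $\EE_\mu\norm{u}_{H^2_\lambda}^2 < \infty$ by Fatou. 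For the $C^\infty$ support one bootstraps the mild formula using the smoothing of $e^{-tA}$, the algebra estimate $\norm{\abs{u}^2 u}_{H^s_\lambda}\lesssim\norm{u}_{H^s_\lambda}^3$ for $s>d/2$, finiteness of all polynomial moments of $\norm{u}_{H^1_\lambda}$ and $\norm{u}_{H^2_\lambda}$ under $\mu$ (standard), and the frequency localization of the $g_j^\lambda$ (read as $\sum_j\norm{g_j^\lambda}_{H^s_\lambda}^2 < \infty$ for every $s$), to get $\EE_\mu\norm{u}_{H^s_\lambda}^2 < \infty$ for all $s$, whence $\mu(C^\infty)=1$. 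The one genuinely non-routine point is the global a priori bound of the previous paragraph: it relies on the positivity of $\tfrac14\sigma\norm{u}_{L^4_\lambda}^4$ in $\mathcal H_\lambda$, which is special to the defocusing sign; for a focusing cubic nonlinearity it fails and finite-time blow-up can occur in $d=2,3$. Everything else --- justifying It\^o by Galerkin, and closing the regularity bootstrap (borderline in $d=3$) via the parabolic smoothing of $e^{-tA}$ --- is routine, and is in any case contained in \cite{KS04}.
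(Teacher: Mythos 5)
The paper does not prove this theorem at all: it is stated as classical and deferred entirely to \cite{KS04}, so there is no in-paper argument to compare against. Your sketch follows the standard route that the cited literature takes (mild formulation and contraction in $C([0,T];H^1)\cap L^2(0,T;H^2)$ using the analytic smoothing of $e^{-t(\nu D+i\Delta)}$, globalization via the wave-action and Hamiltonian balances, Krylov--Bogoliubov plus a smoothing bootstrap for the stationary measure), and the overall structure is sound.

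One step is stated incorrectly, though it is repairable. In the Hamiltonian moment bound you dominate the It\^o correction by $\sigma^3\bigl(\sum_j\norm{g_j^\lambda}_{L^4_\lambda}^2\bigr)\EE\norm{u}_{L^4_\lambda}^2$ and then claim this is controlled ``by the $L^2$-bound.'' With the volume-averaged norms used here, Jensen gives $\norm{u}_{L^2_\lambda}\le\norm{u}_{L^4_\lambda}$, so the inequality you need runs the wrong way: the wave-action estimate does not control $\EE\norm{u}_{L^4_\lambda}^2$. The correct closure is to absorb this term into the dissipation appearing in the Hamiltonian balance, e.g.\ by Young's inequality $\sigma^3 C\,\EE\norm{u}_{L^4_\lambda}^2\le \delta\,\nu\sigma\,\EE\norm{u}_{L^4_\lambda}^4+C_\delta\sigma^5/\nu$, using that $\nu\sigma\EE\norm{u}_{L^4_\lambda}^4$ sits on the dissipative side of \eqref{eq:Hbal} (for fixed $\nu,\sigma$ the constant is harmless). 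With that modification the Gr\"onwall argument closes and the rest of your outline — including the observation that the coercivity of $\mathcal{H}_\lambda$ hinges on the defocusing sign, and the tightness/Fatou argument for the invariant measure — is the standard one.
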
 


\subsection{Proof of Theorem \ref{thm:Inverse}}
We begin with the proof of Theorem \ref{thm:Inverse}, which is rather straightforward after the following lemma of diagonalization type.  
\begin{lemma} \label{lem:diag}
Let $\set{A_{N,k}}_{(N,k) \in \mathbb N^2}$ be a non-negative, two-parameter sequence.  
If $\forall N$, $\lim_{k \to \infty} A_{N,k} = 0$, then $\exists$ a non-decreasing sequence ${N_k}$ with $\lim_{k \to \infty} N_k = \infty$ such that $\lim_{k \to \infty} A_{N_k,k} = 0$.
\end{lemma}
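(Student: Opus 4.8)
The plan is to run a standard diagonal extraction, with the bookkeeping arranged so that the resulting sequence $\{N_k\}$ comes out monotone and divergent rather than merely being some subsequence.

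\textbf{Step 1 (choice of thresholds).} For each fixed $N \in \mathbb{N}$ the hypothesis $\lim_{k\to\infty} A_{N,k}=0$ gives some $K_N \in \mathbb{N}$ with $A_{N,k} < 1/N$ for all $k \ge K_N$. I would then pass to the strictly increasing thresholds $\widetilde K_N := \max\{K_1,\dots,K_N\} + N$; one checks $\widetilde K_{N+1} > \widetilde K_N$ and $\widetilde K_N \ge N \to \infty$, while still $A_{N,k} < 1/N$ whenever $k \ge \widetilde K_N$ (since $\widetilde K_N \ge K_N$).

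\textbf{Step 2 (definition of $N_k$).} Set $N_k := 1$ for $k < \widetilde K_1$ and $N_k := \max\{\,N : \widetilde K_N \le k\,\}$ for $k \ge \widetilde K_1$; the set on the right is finite and nonempty because the $\widetilde K_N$ are strictly increasing to $\infty$, so the max is well defined. Monotonicity in $k$ is immediate since the set can only grow as $k$ increases, and $N_k \to \infty$ because for any $M$ we have $M \in \{N : \widetilde K_N \le k\}$ once $k \ge \widetilde K_M$, hence $N_k \ge M$ for all such $k$. Finally, for $k \ge \widetilde K_1$, writing $m = N_k$ we have $\widetilde K_m \le k$ by definition of $N_k$, so $A_{N_k,k} = A_{m,k} < 1/m = 1/N_k \to 0$, which gives $\lim_{k\to\infty} A_{N_k,k} = 0$.

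There is no genuine obstacle here; the only point that needs a moment of care — and the only reason the naive ``for each $N$ pick $k$ large'' does not immediately work — is forcing the threshold indices $\widetilde K_N$ to be strictly increasing, which is what makes ``the largest $N$ whose threshold $k$ has already passed'' both well defined and guaranteed to tend to infinity (so $N_k$ does not stall at a finite value). Discarding the behavior on the finite initial segment $k < \widetilde K_1$ is harmless for the limit.
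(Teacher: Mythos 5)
Your proof is correct and follows essentially the same route as the paper's: choose thresholds for each $N$, force them to be strictly increasing, and let $N_k$ be the step function that advances each time $k$ crosses a threshold (the paper bounds $A_{N_k,k}$ by $2^{-N_k}$ where you use $1/N_k$, an immaterial difference). Nothing further is needed.
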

\begin{proof}
By definition, $\forall N, \exists k_N$ such that for $k \geq k_N$, $A_{N,k} < 2^{-N}$.
Without loss of generality we may take $k_N$ to be strictly increasing by replacing $k_N \mapsto \max(k_1,..,k_{N-1},k_N) + 1$.  
We now define the sequence $\set{N_k}_{k \geq 1}$ as 
\begin{align*}
N_k & = 1 \quad 1 \leq k \leq k_2 \\
N_k & = 2 \quad k_2 < k \leq k_3 \\
N_k & = 3 \quad k_3 < k \leq k_4 \\
\ldots & \ldots. 
\end{align*}
The desired conclusion follows upon noting that the construction ensures that for all $k \geq k_1$, there holds, 
\begin{align*}
A_{N_k,k} \leq 2^{-N_k}. 
\end{align*}
and $\lim_{k \to \infty} N_k = \infty$ as desired. 
\end{proof} 

\begin{proof}[\textbf{Proof of Theorem \ref{thm:Inverse}}]
First we prove that \eqref{eq:ADWA} implies \eqref{eq:PiWA}. 
By Lemma \ref{lem:diag} applied to
$$A_{N,k} = \EE \frac{\nu_k}{\sigma^2_k} \EE \norm{D^{1/2} P_{\geq N^{-1}} u}_{L^2_{\lambda_k}}^2 = 0,$$
we see that \eqref{eq:ADWA} implies $\exists N_F(k) \to 0$ such that
\begin{align}
\lim_{k \to \infty}\frac{\nu_k}{\sigma^2_k} \EE \norm{D^{1/2} P_{\geq N_F} u}_{L^2_{\lambda_k}}^2 = 0. \label{def:NF}
\end{align}
In the proof here and in all proofs in the sequel, we omit the $k$ subscripts on $\nu,\sigma$, and $\lambda$ for simplicity of notation.

By projecting \eqref{def:NLS} with $P_{\geq N}$, pairing with $\overline{u}$, and applying It\^o's lemma we have for any $N > 0$, 
\begin{align}
\frac{1}{\sigma}\EE \Pi_{WA}(N) = - \frac{\nu}{\sigma^2} \EE \norm{D^{1/2} P_{\geq N} u}_{L^2_\lambda}^2 + \frac{1}{2} \sum_{j} \fint \abs{P_{\geq N} g_{j}^\lambda}^2 \dx.  \label{eq:WAFB}
\end{align}
First consider the external forcing term.
By the definition of $\eps_{wa}$ we have
\begin{align*}
\abs{\frac{1}{2} \sum_{j} \fint \abs{P_{\geq N} g_{j}^\lambda}^2 \dx - \eps_{wa}} & \lesssim \sum_{j} \norm{P_{\lesssim N} g_j^L}_{L^2_\lambda}^2.
\end{align*}
Therefore, the contribution of the forcing converges to $\eps_{wa}$ as $N \to 0$ uniformly in $\lambda$ by the low-frequency assumption \eqref{ineq:glofreq} on the noise. 
Putting this together with \eqref{def:NF} implies the desired result \eqref{eq:PiWA}.

Next, we prove that \eqref{eq:PiWA} implies \eqref{eq:ADWA}.
By the above argument we have
\begin{align*}
\frac{1}{\sigma}\EE \Pi_{WA}(N) - \eps_{wa} = - \frac{\nu}{\sigma^2} \EE \norm{D^{1/2} P_{\geq N} u}_{L^2_\lambda}^2 + o_{N \to 0}(1).
\end{align*}
Hence, by the assumption \eqref{eq:PiWA},  $\exists N_F \to 0$ such that
\begin{align*}
\limsup_{k \to \infty}\frac{\nu}{\sigma^2} \EE \norm{D^{1/2} P_{\geq N_F} u}_{L^2_\lambda}^2 & = \lim_{N_0 \to 0} \limsup_{k \to \infty} \sup_{N \in (N_F,N_0)} \frac{\nu}{\sigma^2} \EE \norm{D^{1/2} P_{\geq N} u}_{L^2_\lambda}^2 \\
& = 0, 
\end{align*}
which in particular implies \eqref{eq:ADWA} (by the monotonicity with respect to $N_0$).
This completes the proof. 
\end{proof}

\subsection{Proof of Theorem \ref{thm:Direct}}
Next we consider the more interesting case of the direct cascade.

\begin{proof}[\textbf{Proof of Theorem \ref{thm:Direct}}]
As above, we first prove that \eqref{eq:ADKE} implies \eqref{eq:PiKE} and \eqref{eq:PiH}.

By Lemma \ref{lem:diag}, \eqref{eq:ADKE} implies $\exists N_D(k) \to \infty$ such that
\begin{align*}
\lim_{\nu,\sigma \to 0}\frac{\nu}{\sigma^2} \EE \norm{\grad D^{1/2} P_{\leq N_D} u}_{L^2_\lambda}^2 & = 0.
\end{align*}
First note the `balance' of kinetic energy that arises from It\^o's lemma applied to the quantity $\frac{1}{2}\norm{\grad u_{\leq N}}_{L^2}^2$, 
\begin{align*}
\frac{1}{\sigma}\EE \Pi_{KE}(N) = -\frac{\nu}{\sigma^2} \EE \norm{\grad D^{1/2} P_{\leq N} u}_{L^2_\lambda}^2 + \frac{1}{2} \sum_{j} \fint \abs{P_{\leq N} \grad g_{\ell}^\lambda}^2 \dx.  
\end{align*}
First consider the forcing term.
We have
\begin{align*}
\abs{\frac{1}{2} \sum_{j} \fint \abs{P_{\leq N} \grad g_{j}^\lambda}^2 \dx - \eps_{ke}} & \lesssim \sum_{j} \norm{P_{\gtrsim N} \grad g_j^L}_{L^2_\lambda}^2. 
\end{align*}
Therefore, the contribution of the forcing converges to $\eps_{ke}$ as $N \to \infty$ uniformly in $\lambda$ by the uniform regularity assumption \eqref{ineq:ghifreq} on the noise. 
By Lemma \ref{lem:diag}, $\exists N_D = N_D(k)$ with $\lim_{k \to \infty} N_D = \infty$ such that
\begin{align*}
\lim_{k \to \infty} \frac{\nu_k}{\sigma^2_k} \EE \norm{\grad D^{1/2} P_{\leq N_D} u}_{L^2_{\lambda_k}}^2 = 0. 
\end{align*}
Hence we obtain \eqref{eq:PiKE}. 

Turn next to the Hamiltonian energy flux \eqref{eq:PiH}.  
Consider the balance of Hamiltonian energy arising from applying It\^o's lemma to $\mathcal{H}[P_{\leq N} u]$, yielding
\begin{align}
\frac{1}{\sigma}\EE \Pi_{\mathcal{H}}(N) & = -\frac{\nu}{\sigma^2} \EE \norm{\grad D^{1/2} P_{\leq N} u}_{L^2_\lambda}^2 - \frac{\nu}{\sigma} \EE \norm{P_{\leq N} u}_{L^4_\lambda}^4 - 3\frac{\nu}{\sigma} \EE \norm{P_{\leq N}u \grad P_{\leq N}u}_{L^2_\lambda}^2 \notag \\
& \quad + \frac{1}{2}\sum_j \fint_{\mathbb T_\lambda^d} \abs{P_{\leq N}g^\lambda_j}^2 \dx + \frac{\sigma}{2}\sum_j \fint_{\mathbb T_\lambda^d} \abs{P_{\leq N} u}^2 \abs{P_{\leq N}g^\lambda_j}^2 \dx. \label{eq:HFluxBal}
\end{align}
We have already seen that the assumptions on the noise imply
\begin{align*}
\frac{1}{2}\sum_j \fint_{\mathbb T_\lambda^d} \abs{P_{\leq N}g^\lambda_j}^2 \dx = \eps_{ke} + o_{N \to \infty}(1). 
\end{align*}
Furthermore we have (by H\"older's inequality and Bernstein's inequality (Lemma \ref{lem:Bern})), 
\begin{align*}
\frac{\sigma}{2}\sum_j \fint_{\mathbb T_\lambda^d} \abs{P_{\leq N} u}^2 \abs{P_{\leq N}g^\lambda_j}^2 \dx & \lesssim \sigma \EE \norm{P_{\leq N} u}_{L^4_\lambda}^2 \sum_j \norm{P_{\leq N} g^\lambda_j}_{L^4_\lambda}^2 \\
& \lesssim \sigma \EE \norm{u}_{L^4_\lambda}^2 \sum_j \norm{g^\lambda_j}_{L^4_\lambda}^2,
\end{align*}
which then vanishes in the limit by \eqref{eq:L2van} and the assumption \eqref{ineq:gL4} on the forcing. 
Turn next to the dissipation terms.
By Lemma \ref{lem:diag}, $\forall k$, $\exists N_D(k)$ with $\lim_{k \to \infty} N_D = \infty$ such that the following holds by \eqref{eq:L2van} and \eqref{eq:PEDvan} respectively, 
\begin{align*}
\lim_{k \to \infty} \frac{\nu}{\sigma^2} \EE \norm{\grad P_{\leq N_D} u}_{L^2_\lambda}^2 & = 0 \\
\lim_{k \to \infty} \frac{\nu}{\sigma} \EE \norm{P_{\leq N_D} u}_{L^4_\lambda}^4 & = 0. 
\end{align*}
Furthermore, by Bernstein's inequality,  
\begin{align*}
\frac{\nu}{\sigma} \EE \norm{P_{\leq N}u \grad P_{\leq N}u}_{L^2_\lambda}^2 & \lesssim \frac{\nu}{\sigma}\EE \norm{P_{\leq N}u}_{L^4}^2 \norm{\grad P_{\leq N} u}_{L^4_\lambda}^2 \\
& \lesssim N^2 \frac{\nu}{\sigma} \EE \norm{P_{\leq N}u}_{L^4_{\lambda}}^4, 
\end{align*}
and therefore, by applying Lemma \ref{lem:diag} again and adjusting $N_D$ further, we can also impose
\begin{align*}
\lim_{k \to \infty}\frac{\nu}{\sigma} \EE \norm{P_{\leq N_D}u \grad P_{\leq N_D}u}_{L^2_\lambda}^2 &  = 0, 
\end{align*}
while still having $N_D \to \infty$. 
Putting everything together we have, 
\begin{align*}
\sup_{N \in (N_0,N_D)}\abs{\frac{1}{\sigma}\EE \Pi_{\mathcal{H}}(N) - \eps_{ke}} &\lesssim \frac{\nu}{\sigma^2} \EE \norm{\grad P_{\leq N_D} u}_{L^2_\lambda}^2 +  \lim_{k \to \infty} \frac{\nu}{\sigma} \EE \norm{P_{\leq N_D} u}_{L^4_\lambda}^4 \\ 
& \quad + \frac{\nu}{\sigma} \EE \norm{P_{\leq N_D}u \grad P_{\leq N_D}u}_{L^2_\lambda}^2 + \sigma \EE \norm{u}_{L^4_\lambda}^2, + o_{N_0 \to \infty}(1), 
\end{align*}
which therefore completes the proof of \eqref{eq:PiH}.

Next, we check that \eqref{eq:PiKE} and \eqref{eq:PiH} imply \eqref{eq:ADKE}. 
This follows easily as in the proof of Theorem \ref{thm:Inverse} (and in fact, we only need to use \eqref{eq:PiKE}). 
By the balance of kinetic energy we have 
\begin{align*}
\frac{1}{\sigma}\EE \Pi_{KE}(N)  & = -\frac{\nu}{\sigma^2} \EE \norm{\grad D^{1/2} P_{\leq N} u}_{L^2_\lambda}^2 + \eps_{ke} + o_{N \to \infty}(1), 
\end{align*}
and so by \eqref{eq:PiKE}, $\exists N_D(k) \to \infty$ such that 
\begin{align*}
\limsup_{k \to \infty}\frac{\nu_k}{\sigma^2_k} \EE \norm{\grad D^{1/2} P_{\leq N_D} u}_{L^2_\lambda}^2
& \leq \lim_{N_0 \to \infty} \limsup_{k \to \infty} \sup_{N \in (N_0,N_D)} \abs{\frac{1}{\sigma}\EE \Pi_{KE}(N) - \eps_{ke}}, 
\end{align*}
from which the desired result \eqref{eq:ADKE} follows. 
\end{proof}

We end the section with a proof of Lemma \ref{lem:KEbalance}.
\begin{proof}[\textbf{Proof of Lemma \ref{lem:KEbalance}}]
  Recall the balance of Hamiltonian energy dissipation \eqref{eq:Hbal} which gives
\begin{align*}
\frac{\nu}{\sigma^2} \EE_{\mu} \norm{\grad D^{1/2} u}_{L^2_\lambda}^2 + \frac{\nu}{\sigma} \mathrm{Re} \EE \fint D \overline{u} (\abs{u}^2 u) \dx = \eps_{ke} + \frac{\sigma}{2} \EE_\mu \sum_j \fint_{\mathbb T_L^d} \abs{u}^2 \abs{g^\lambda_j}^2 \dx. 
\end{align*}
First consider the It\^o correction term. By H\"older's inequality
\begin{align*}
\frac{\sigma}{2} \EE_\mu \sum_j \fint_{\mathbb T_\lambda^d} \abs{u}^2 \abs{g^\lambda_j}^2 \dx \lesssim \sigma \EE \norm{u}_{L^4_{\lambda}}^2 \sum_{j} \norm{g^\lambda_j}_{L^4_\lambda}^2,
\end{align*}
which then vanishes in the limit by assumption \eqref{eq:L2van}. 

We next prove that  the potential energy dissipation vanishes in the limit, by observing that due to our particular choice of $D$,
\begin{align*}
\mathrm{Re} \EE \fint D \overline{u} (\abs{u}^2 u) \dx  & = \EE \norm{u}_{L^4}^4 + 3\EE \norm{u \grad u}_{L^2_\lambda}^2 \\
& \leq \EE \norm{u}_{L^4}^4 + 3\EE \norm{u}_{L^4_\lambda}^2 \norm{\grad u}_{L^4_\lambda}^2, 
\end{align*}
which then vanishes from the Hamiltonian energy balance due to \eqref{eq:L4van} \eqref{eq:PEkiller}, yielding the final result. 
\end{proof} 

\section{Characterization of non-anomaly} \label{sec:NA}
Theorems \ref{thm:Inverse} and \ref{thm:Direct} show that all of the dissipation concentrating at low frequencies (resp. high frequencies) is equivalent to the cascade flux laws.
The opposite is also true: no dissipation concentrating at low (resp. high) frequencies is equivalent to the nonlinear fluxes vanishing. 
\begin{theorem}[Vanishing wave-action cascade] \label{thm:ICFail}
Let $\mu_{\lambda,\sigma,\nu}$ be a family of invariant measures of the NLS \eqref{def:NLS} for a sequence of parameters $(\lambda_k,\sigma_k,\nu_k) \to (\infty,0,0)$.
Suppose that strong driving or fluctuation dissipation holds:
\begin{align*}
\limsup_{k \to \infty} \frac{\nu_k}{\sigma^2_k} < \infty. 
\end{align*}
Then the following are equivalent: 
\begin{itemize}
\item[(i)] Vanishing of wave-action dissipation at low frequencies
\begin{align}
\lim_{N \to 0} \limsup_{k \to \infty} \frac{\nu_k}{\sigma^2_k} \EE \norm{D^{1/2} u_{<N} }_{L^2_{\lambda_k}}^2  = 0 \label{ineq:HnsBd}
\end{align}
\item[(ii)] The vanishing of the nonlinear wave-action flux at high frequencies 
\begin{align}
\lim_{N_0 \to 0}\limsup_{k \to \infty} \sup_{N \in (0,N_0) }\abs{\frac{1}{\sigma_k}\EE \Pi_{WA}(N)} = 0. \label{eq:WAFvan}
\end{align}
\end{itemize}
\end{theorem}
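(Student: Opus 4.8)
The plan is to reduce both (i) and (ii) to a single flux-balance bound, as in the Proposition stated above. Precisely, I aim to prove that there is a function $\eta$ with $\eta(N)\to0$ as $N\to0$ such that, for every $k$ and every $0<N\le1$,
\begin{align}
\abs{\frac{1}{\sigma_k}\EE \Pi_{WA}(N) - \frac{\nu_k}{\sigma_k^2}\EE \norm{D^{1/2} u_{<N}}_{L^2_{\lambda_k}}^2} \;\le\; \frac{\nu_k}{\sigma_k^2}\EE \norm{D^{1/2} u_{<CN}}_{L^2_{\lambda_k}}^2 + \eta(N), \label{eq:sketchkey}
\end{align}
for a fixed constant $C\ge1$ (with $C=1$ and the first term on the right absent if the Littlewood--Paley cutoffs are taken sharp). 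The point is that $\eta$ is independent of $k$, so \eqref{eq:sketchkey} survives taking $\limsup_{k\to\infty}$ and then $\lim_{N_0\to0}\sup_{N\in(0,N_0)}$ in the order dictated by (i) and (ii).

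To prove \eqref{eq:sketchkey} I would start from the finite-scale identity \eqref{eq:WAFB} obtained in the proof of Theorem \ref{thm:Inverse}. The forcing term $\tfrac12\sum_j \norm{P_{\geq N}g_j^\lambda}_{L^2_\lambda}^2$ differs from $\eps_{wa}$ by at most $\sum_j\norm{P_{\lesssim N}g_j^\lambda}_{L^2_\lambda}^2=:\eta(N)$, which tends to $0$ as $N\to0$ uniformly in $\lambda$ by \eqref{ineq:glofreq} (the same estimate used in the proof of Theorem \ref{thm:Inverse}); moreover it lies below $\eps_{wa}$ since $P_{\geq N}$ is a Fourier contraction. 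Next, \eqref{eq:WABal} gives $\tfrac{\nu_k}{\sigma_k^2}\EE\norm{D^{1/2}u}_{L^2_{\lambda_k}}^2 = \eps_{wa}$ for every $k$, and hence, using $u=u_{<N}+u_{\geq N}$,
\begin{align*}
-\frac{\nu}{\sigma^2}\EE\norm{D^{1/2}u_{\geq N}}_{L^2_\lambda}^2 + \eps_{wa} = \frac{\nu}{\sigma^2}\EE\norm{D^{1/2}u_{<N}}_{L^2_\lambda}^2 + \frac{2\nu}{\sigma^2}\,\mathrm{Re}\,\EE\fint D^{1/2}u_{<N}\,\overline{D^{1/2}u_{\geq N}}\,\dx,
\end{align*}
where the cross term is nonnegative and, for smooth cutoffs overlapping only on a frequency annulus of width $\approx N$, is bounded by $\tfrac{\nu}{\sigma^2}\EE\norm{D^{1/2}u_{<CN}}_{L^2_\lambda}^2$. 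Combining these two observations with \eqref{eq:WAFB} yields \eqref{eq:sketchkey}.

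Given \eqref{eq:sketchkey}, the equivalence is a short bookkeeping exercise. For (i) $\Rightarrow$ (ii): for $N\in(0,N_0)$ the right-hand side of \eqref{eq:sketchkey} is, by monotonicity of $N\mapsto\EE\norm{D^{1/2}u_{<N}}_{L^2_\lambda}^2$ and of $\eta$, at most $\tfrac{\nu_k}{\sigma_k^2}\EE\norm{D^{1/2}u_{<CN_0}}_{L^2_{\lambda_k}}^2+\eta(N_0)$; taking $\sup_{N<N_0}$, then $\limsup_k$, then $N_0\to0$, and invoking \eqref{ineq:HnsBd} (applied with $N=CN_0$) together with $\eta(N_0)\to0$ gives \eqref{eq:WAFvan}. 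For (ii) $\Rightarrow$ (i): \eqref{eq:sketchkey} gives $\tfrac{\nu_k}{\sigma_k^2}\EE\norm{D^{1/2}u_{<N}}_{L^2_{\lambda_k}}^2 \le \abs{\tfrac1{\sigma_k}\EE\Pi_{WA}(N)}+\eta(N_0)$ for $N\in(0,N_0)$, so taking $\sup_{N<N_0}$ on the left (again by monotonicity), then $\limsup_k$ and $N_0\to0$, and invoking \eqref{eq:WAFvan}, yields \eqref{ineq:HnsBd}. The only genuine friction in the whole argument is the non-orthogonality of smooth Littlewood--Paley pieces, i.e. the cross term above (this could be avoided by working with sharp frequency projections), together with keeping the $k\to\infty$, $N\to0$, and $\sup_N$ limits in exactly the order the two statements demand. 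No use is made of the hypothesis $\limsup_k\nu_k/\sigma_k^2<\infty$: one has $\tfrac{\nu_k}{\sigma_k^2}\EE\norm{D^{1/2}u}_{L^2_{\lambda_k}}^2=\eps_{wa}\approx1$ unconditionally, and this hypothesis serves only to keep the statement within the regime of interest.
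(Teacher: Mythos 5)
Your proof is correct and takes essentially the same route as the paper's: both combine the finite-scale flux balance \eqref{eq:WAFB} with the global balance \eqref{eq:WABal} and the low-frequency forcing assumption \eqref{ineq:glofreq} to write $\frac{1}{\sigma_k}\EE \Pi_{WA}(N) = \frac{\nu_k}{\sigma_k^2}\EE\norm{D^{1/2}u_{<N}}_{L^2_{\lambda_k}}^2 + o_{N\to 0}(1)$ uniformly in $k$, the paper silently absorbing your Littlewood--Paley cross term into this identity (and, like you, never actually using $\limsup_k \nu_k/\sigma_k^2 < \infty$). One cosmetic point: in the (ii)$\Rightarrow$(i) direction the absolute-value form of your key bound is not literally sufficient, since it reintroduces the dissipation at scale $CN$ on the right-hand side; you should instead quote the signed identity you derived en route, in which the cross term is nonnegative and can simply be dropped when bounding the low-frequency dissipation by the flux.
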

\begin{remark}
Notice that any a priori estimate on the low frequencies of the form: $\exists \delta > 0$ such that: 
\begin{align*}
\limsup_{\nu,\sigma \to 0} \frac{\nu}{\sigma^2}\norm{\abs{\grad}^{-\delta} D^{1/2} u_{\leq 1}}_{L^2_\lambda}^2 < \infty,
\end{align*}
is sufficient to prove \eqref{ineq:HnsBd}. 
\end{remark}

The corresponding theorem for the kinetic energy cascade is the following.

\begin{theorem}[Vanishing kinetic energy cascade] \label{thm:DCFail}
Let $\mu_{\lambda,\sigma,\nu}$ be a family of invariant measures of the NLS \eqref{def:NLS} for a sequence of parameters $(\lambda_k,\sigma_k,\nu_k) \to (\infty,0,0)$.
Suppose that the following conditions hold:
\begin{itemize}
\item[(i)] strong driving or fluctuation dissipation:
\begin{align*}
\limsup_{k \to \infty} \frac{\nu_k}{\sigma^2_k} < \infty. 
\end{align*}
\item[(ii)] the weak nonlinearity assumptions \eqref{eq:L2van}, \eqref{eq:L4van}, and \eqref{eq:PEkiller}.  
\end{itemize} 
Then, the following are equivalent: 
\begin{itemize}
\item[(i)] The vanishing of kinetic energy dissipation at high frequencies 
\begin{align}
\lim_{N \to \infty} \limsup_{k \to \infty} \frac{\nu_k}{\sigma^2_k} \EE \norm{\grad D^{1/2} u_{> N}}_{L^2_{\lambda_k}}^2 = 0. \label{ineq:HsBdHiF}
\end{align}
\item[(ii)] The vanishing of kinetic energy and Hamiltonian energy flux at high frequencies:
\begin{align}
\lim_{N_0 \to \infty}\limsup_{k \to \infty} \sup_{N \in (N_0,\infty) }\abs{\frac{1}{\sigma_k}\EE \Pi_{\mathcal{H}}(N)} = 0 \label{eq:HFvan}
\end{align}
and
\begin{align}
\lim_{N_0 \to \infty}\limsup_{k \to \infty} \sup_{N \in (N_0,\infty) }\abs{\frac{1}{\sigma_k}\EE \Pi_{KE}(N)} = 0. \label{eq:KEvan}
\end{align}
\end{itemize} 
\end{theorem}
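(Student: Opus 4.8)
The plan is to deduce Theorem~\ref{thm:DCFail} directly from the high-frequency flux--dissipation identities in the Proposition above, which apply precisely because hypothesis (ii) is exactly the weak-nonlinearity package \eqref{eq:L2van}, \eqref{eq:L4van}, \eqref{eq:PEkiller}. That Proposition gives, for every fixed $N \geq 1$,
\begin{align*}
\limsup_{k \to \infty} \abs{\tfrac{1}{\sigma_k}\EE \Pi_{KE}(N) - \tfrac{\nu_k}{\sigma_k^2} \EE \norm{\grad D^{1/2} u_{> N}}_{L^2_{\lambda_k}}^2} = o_{N \to \infty}(1),
\end{align*}
and the same with $\Pi_{\mathcal H}$ in place of $\Pi_{KE}$. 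The only additional structural fact needed is the elementary monotonicity that $N \mapsto \EE \norm{\grad D^{1/2} u_{>N}}_{L^2_\lambda}^2$ is non-increasing, so that $\sup_{N \in (N_0,\infty)} \tfrac{\nu_k}{\sigma_k^2}\EE \norm{\grad D^{1/2} u_{>N}}_{L^2_{\lambda_k}}^2 = \tfrac{\nu_k}{\sigma_k^2}\EE \norm{\grad D^{1/2} u_{>N_0}}_{L^2_{\lambda_k}}^2$.

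With these in hand, (i) $\Rightarrow$ (ii) is the triangle inequality: for $N \in (N_0,\infty)$,
\begin{align*}
\abs{\tfrac{1}{\sigma_k}\EE \Pi_{KE}(N)} \leq \tfrac{\nu_k}{\sigma_k^2}\EE\norm{\grad D^{1/2}u_{>N_0}}_{L^2_{\lambda_k}}^2 + \abs{\tfrac{1}{\sigma_k}\EE\Pi_{KE}(N) - \tfrac{\nu_k}{\sigma_k^2}\EE\norm{\grad D^{1/2}u_{>N}}_{L^2_{\lambda_k}}^2},
\end{align*}
and taking $\sup_{N \in (N_0,\infty)}$, then $\limsup_{k\to\infty}$, then $N_0 \to \infty$ kills the first term by \eqref{ineq:HsBdHiF} and the second by the Proposition, giving \eqref{eq:KEvan}; running the same estimate with $\Pi_{\mathcal H}$ gives \eqref{eq:HFvan}. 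For (ii) $\Rightarrow$ (i), read the same triangle inequality the other way,
\begin{align*}
\tfrac{\nu_k}{\sigma_k^2}\EE\norm{\grad D^{1/2}u_{>N}}_{L^2_{\lambda_k}}^2 \leq \abs{\tfrac{1}{\sigma_k}\EE\Pi_{KE}(N)} + \abs{\tfrac{1}{\sigma_k}\EE\Pi_{KE}(N) - \tfrac{\nu_k}{\sigma_k^2}\EE\norm{\grad D^{1/2}u_{>N}}_{L^2_{\lambda_k}}^2},
\end{align*}
take $\sup_{N\in(N_0,\infty)}$, $\limsup_k$, $N_0 \to \infty$, and invoke \eqref{eq:KEvan} (or \eqref{eq:HFvan} with the $\Pi_{\mathcal H}$ version) and the Proposition to obtain \eqref{ineq:HsBdHiF}.

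I expect the main (though routine) obstacle to be justifying the Proposition's estimate with the stated uniformity, i.e. that when the kinetic-energy balance and the Hamiltonian flux balance \eqref{eq:HFluxBal} are evaluated at scale $N$, the forcing contributions $\tfrac12\sum_j\fint\abs{\grad P_{\leq N}g_j^\lambda}^2$ tend to $\eps_{ke}$ and the potential-energy remainders $\tfrac{\nu}{\sigma}\EE\norm{P_{\leq N}u}_{L^4_\lambda}^4$, $\tfrac{\nu}{\sigma}\EE\norm{P_{\leq N}u\,\grad P_{\leq N}u}_{L^2_\lambda}^2$, together with the It\^o correction $\tfrac{\sigma}{2}\sum_j\fint\abs{P_{\leq N}u}^2\abs{P_{\leq N}g_j^\lambda}^2$, all tend to $0$ \emph{uniformly in $N$} after $k \to \infty$. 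This is handled by bounding each Littlewood--Paley-truncated quantity by its untruncated counterpart (using $L^p$-boundedness of $P_{\leq N}$ and $[\grad,P_{\leq N}]=0$), applying \eqref{eq:L2van}, \eqref{eq:L4van}, \eqref{eq:PEkiller}, Assumption~\ref{Ass:g}, and then Lemma~\ref{lem:KEbalance} (which converts $\tfrac{\nu}{\sigma^2}\EE\norm{\grad D^{1/2}u_{\leq N}}^2$ into $\eps_{ke} - \tfrac{\nu}{\sigma^2}\EE\norm{\grad D^{1/2}u_{>N}}^2 + o(1)$), exactly as in the proofs of Theorem~\ref{thm:Direct} and Lemma~\ref{lem:KEbalance}; the standing strong-driving / fluctuation-dissipation hypothesis keeps these remainders bounded along the way. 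Everything remaining is the diagonalization Lemma~\ref{lem:diag} and bookkeeping of the iterated limits in $N_0$ and $k$.
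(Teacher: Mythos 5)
Your proposal is correct and follows essentially the same route as the paper: the paper likewise derives the flux--dissipation identities from the kinetic-energy balance and \eqref{eq:HFluxBal}, uses the weak-nonlinearity assumptions together with Lemma \ref{lem:KEbalance} to rewrite $\eps_{ke} - \tfrac{\nu}{\sigma^2}\EE\norm{\grad D^{1/2}u_{\leq N}}_{L^2_\lambda}^2$ as $\tfrac{\nu}{\sigma^2}\EE\norm{\grad D^{1/2}u_{>N}}_{L^2_\lambda}^2 + o(1)$, and then exploits monotonicity in $N$ to identify $\sup_{N\geq N_0}$ of the flux with the dissipation above frequency $N_0$, which yields the equivalence in both directions. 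Your version merely packages these identities as the stated Proposition and spells out the $\Pi_{KE}$ case that the paper leaves to the reader.
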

\begin{remark}
Notice that any a priori regularity estimate such as: $\exists \delta > 0$ such that: 
\begin{align*}
\limsup_{\nu,\sigma \to 0} \frac{\nu}{\sigma^2}\EE \norm{\abs{\grad}^{\delta} \grad D^{1/2} u_{\geq 1}}_{L^2_\lambda}^2 < \infty,
\end{align*}
is sufficient to prove \eqref{ineq:HsBdHiF}.
Hence, the formation of a true cascade requires that the a priori regularity estimates given by the dissipation balance are basically sharp, at least in terms of estimates available in $L^2$ moments in $\PP$. 
\end{remark}

\subsection{Proof of Theorems \ref{thm:ICFail} and \ref{thm:DCFail}}
\begin{proof}[\textbf{Proof of Theorem \ref{thm:ICFail}}]
As in previous proofs, we drop the $k$ subscripts. 
By the wave action dissipation balance \eqref{eq:WABal} and wave-action flux balance \eqref{eq:WAFB} we have $\forall N > 0$ (as in the proof of Theorem \ref{thm:Inverse}), 
\begin{align*}
\frac{1}{\sigma}\EE \Pi_{WA}(N) = \frac{\nu}{\sigma^2}\EE \norm{D^{1/2} P_{\leq N} u}_{L^2_\lambda}^2 + o_{N \to 0}(1), 
\end{align*}
which implies
\begin{align*}
\sup_{N \in (0,N_0)}\abs{\frac{1}{\sigma}\Pi_{WA}(N)} = \frac{\nu}{\sigma^2}\EE \norm{D^{1/2} P_{\leq N_0} u}_{L^2_\lambda}^2 + o_{N_0 \to 0}(1). 
\end{align*}
Therefore \eqref{ineq:HnsBd} is equivalent to \eqref{eq:WAFvan}. 
\end{proof}

\begin{proof}[\textbf{Proof of Theorem \ref{thm:DCFail}}]
We only consider the more interesting case of \eqref{eq:HFvan}.
By the Hamiltonian energy dissipation balance \eqref{eq:HFluxBal} we have
\begin{align*}
\frac{1}{\sigma}\EE \Pi_{\mathcal{H}}(N) & = -\frac{\nu}{\sigma^2} \EE \norm{\grad D^{1/2} P_{\leq N} u}_{L^2_\lambda}^2 - \frac{\nu}{\sigma} \EE \norm{P_{\leq N} u}_{L^4_\lambda}^4 - 3\frac{\nu}{\sigma} \EE \norm{P_{\leq N}u \grad P_{\leq N}u}_{L^2_\lambda}^2 \notag \\
& \quad + \eps_{ke} + \frac{\sigma}{2}\sum_j \fint_{\mathbb T_\lambda^d} \abs{P_{\leq N} u}^2 \abs{P_{\leq N}g^\lambda_j}^2 \dx + o_{N \to \infty}(1). 
\end{align*}
By the assumptions \eqref{eq:L2van}, \eqref{eq:L4van}, and \eqref{eq:PEkiller} as in the proof of Theorem \ref{thm:Direct} we have 
\begin{align*}
\frac{1}{\sigma}\EE \Pi_{\mathcal{H}}(N) & = \eps_{ke} - \frac{\nu}{\sigma^2} \EE \norm{\grad D^{1/2} P_{\leq N} u}_{L^2_\lambda}^2 + o_{k \to \infty}(1) + o_{N \to \infty}(1). 
\end{align*}
By Lemma \ref{lem:KEbalance}, there then holds 
\begin{align*}
\frac{1}{\sigma}\EE \Pi_{\mathcal{H}}(N) & = \frac{\nu}{\sigma^2} \EE \norm{\grad D^{1/2} P_{> N} u}_{L^2_\lambda}^2 + o_{k \to \infty}(1) + o_{N \to \infty}(1). 
\end{align*}
Therefore,
\begin{align*}
\sup_{N \geq N_0} \abs{\frac{1}{\sigma}\EE \Pi_{\mathcal{H}}(N)} & = \frac{\nu}{\sigma^2} \EE \norm{\grad D^{1/2} P_{> N_0} u}_{L^2_\lambda}^2 + o_{k \to \infty}(1) + o_{N_0 \to \infty}(1)
\end{align*}
and therefore we see that \eqref{ineq:HsBdHiF} is equivalent to \eqref{eq:HFvan}. 
\end{proof}

\section{Partial dissipation anomaly} \label{sec:PDA}
Although not clearly consistent with observations, there is a possibility that some, but not all, of the wave-action (resp. kinetic energy) will be dissipated at asymptotically large scales (resp. small scales), leaving the rest to be dissipated either in the integral or inertial range.
In the strong driving regime
\begin{align*}
\frac{\nu}{\sigma^2} \to 0 
\end{align*}
non-trivial dissipation of wave-action (or kinetic energy) at any fixed scale $N_0$ implies a divergence in amplitude: 
\begin{align}
\EE \norm{u_{\approx N_0}}_{L^{2}_\lambda}^2 \to \infty. \label{eq:diver}
\end{align}
This does not rule out the possibility that some of the wave action is sent to asymptotically low frequencies as $k \to \infty$ and, the most important condition for weak turbulence, condition \eqref{eq:L2van}, is not in contradiction to \eqref{eq:diver} either. 
We make the following assertion about nonlinear wave action flux if there is partial, but not total, dissipation anomaly.
Note that the following theorem does not require the strong driving condition, and it makes sense even if
\begin{align*}
\liminf_{k \to \infty} \frac{\nu_k}{\sigma_k^2} > 0. 
\end{align*}
\begin{theorem}[Partial inverse cascade] \label{thm:InverseP}
Let $\mu_{\lambda,\sigma,\nu}$ be a family of invariant measures of the NLS \eqref{def:NLS} for a sequence of parameters $(\lambda_k,\sigma_k,\nu_k) \to (\infty,0,0)$. Suppose that the external forcing satisfies Assumption \ref{Ass:g}. 
Let $\eps^\star_{wa} \in [0, \eps_{wa}]$ be defined by 
\begin{align}
\liminf_{N_0 \to 0} \liminf_{k \to \infty}\frac{\nu_k}{\sigma^2_k} \EE \norm{D^{1/2} P_{\leq N_0} u}_{L^2_\lambda}^2 = \eps^\star_{wa} \label{eq:ADWApartial}
\end{align}
If $\eps_{wa}^\star > 0$, then $\forall k \geq 1$, $\exists N_F(k)$ which satisfies $\lim_{k \to \infty} N_F = 0$ such that
\begin{align}
\liminf_{N_0 \to 0} \limsup_{k \to \infty} \inf_{N \in (N_F,N_0)} \frac{1}{\sigma}\EE \Pi_{WA}(N) \geq \eps_{wa}^\star. \label{eq:PiWAfail}
\end{align}
\end{theorem}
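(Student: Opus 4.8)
\subsection*{Proof plan for Theorem \ref{thm:InverseP}}

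The plan is to reduce everything to the wave-action flux identity already exploited in the proofs of Theorems~\ref{thm:Inverse} and~\ref{thm:ICFail}, and then run a diagonalization in the spirit of Lemma~\ref{lem:diag}, but tuned to produce a lower bound rather than a vanishing limit. Concretely, projecting~\eqref{def:NLS} by $P_{\geq N}$, pairing with $\overline u$, applying It\^o's lemma to obtain~\eqref{eq:WAFB}, substituting the wave-action dissipation balance~\eqref{eq:WABal} to write $\tfrac{\nu}{\sigma^2}\EE\norm{D^{1/2}P_{\geq N}u}_{L^2_\lambda}^2 = \eps_{wa} - F_k(N)$ with $F_k(N) := \tfrac{\nu_k}{\sigma_k^2}\EE\norm{D^{1/2}P_{\leq N}u}_{L^2_{\lambda_k}}^2$, and bounding the discrepancy between $\tfrac12\sum_j\fint\abs{P_{\geq N}g^{\lambda}_j}^2\dx$ and $\eps_{wa}$ by $\sum_j\norm{P_{\lesssim N}g^\lambda_j}_{L^2_\lambda}^2$, one arrives (exactly as in the proof of Theorem~\ref{thm:ICFail}) at
\[
\frac{1}{\sigma_k}\EE\Pi_{WA}(N) = F_k(N) + r_N(k),
\]
where $\eta(N_0) := \sup_{k}\sup_{N\leq N_0}\abs{r_N(k)}$ satisfies $\eta(N_0)\to 0$ as $N_0\to 0$; the uniformity in $k$ here is precisely the uniform-in-$\lambda$ low-frequency decay~\eqref{ineq:glofreq} of Assumption~\ref{Ass:g}.

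Besides this identity, only two structural facts about $F_k$ are needed: (a) $F_k(\cdot)$ is non-decreasing in $N$, being a frequency-truncated sum of the non-negative quantity $D(k)\abs{\widehat u(k)}^2$; and (b) $F_k(N)\leq \eps_{wa}$ for every $N$, by~\eqref{eq:WABal}. By (a), $N_0\mapsto\liminf_{k\to\infty}F_k(N_0)$ is itself non-decreasing, so the outer $\liminf$ in~\eqref{eq:ADWApartial} is a genuine limit/infimum; hence $\eps^\star_{wa} = \inf_{N_0>0}\liminf_{k\to\infty}F_k(N_0)$, and in particular $\liminf_{k\to\infty}F_k(N_0)\geq\eps^\star_{wa}$ for every fixed $N_0>0$.

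Next I construct $N_F(k)$ via the Lemma~\ref{lem:diag} device: for each $m\geq 1$ choose $K_m$, strictly increasing in $m$, such that $F_k(1/m)\geq\eps^\star_{wa}-1/m$ for all $k\geq K_m$, and set $N_F(k)=1/m$ for $K_m\leq k<K_{m+1}$ (and $N_F(k)=1$ for $k<K_1$). Then $N_F(k)\to 0$ and $\liminf_{k\to\infty}F_k(N_F(k))\geq\eps^\star_{wa}$. Fixing $N_0>0$ and restricting to $k$ so large that $N_F(k)<N_0$, monotonicity of $F_k$ gives $\inf_{N\in(N_F(k),N_0)}F_k(N)\geq F_k(N_F(k))$, so from the flux identity
\[
\inf_{N\in(N_F(k),N_0)}\frac{1}{\sigma_k}\EE\Pi_{WA}(N) \;\geq\; F_k(N_F(k)) - \eta(N_0).
\]
Taking $\limsup_{k\to\infty}$ and using $\liminf_{k\to\infty}F_k(N_F(k))\geq\eps^\star_{wa}$ yields a lower bound of $\eps^\star_{wa}-\eta(N_0)$, and finally letting $N_0\to 0$ removes $\eta(N_0)$ and gives~\eqref{eq:PiWAfail}.

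I do not anticipate a genuine obstacle: the argument is the same flux-balance-plus-diagonalization scheme as in Theorems~\ref{thm:Inverse} and~\ref{thm:ICFail}. The only points demanding care are (i) verifying that the remainder $r_N(k)$ in the flux identity is controlled uniformly in $k$ for $N\leq N_0$ — which is exactly what~\eqref{ineq:glofreq} delivers — and (ii) the bookkeeping of the three nested limits, where the monotonicity of $F_k(\cdot)$ in $N$ is what lets one replace $\inf_{N\in(N_F(k),N_0)}F_k(N)$ by $F_k(N_F(k))$ so that the diagonalized lower bound survives the subsequent $\limsup_{k\to\infty}$ and $\liminf_{N_0\to 0}$.
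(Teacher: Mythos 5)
Your proposal is correct and follows essentially the same route as the paper's proof: the wave-action flux balance \eqref{eq:WAFB} combined with \eqref{eq:WABal} and the low-frequency forcing assumption \eqref{ineq:glofreq} to write $\tfrac{1}{\sigma_k}\EE\Pi_{WA}(N)$ as the low-frequency dissipation plus a uniformly small remainder, then monotonicity in $N$ of the truncated dissipation to control the infimum over the inertial range, and finally a Lemma~\ref{lem:diag}-type diagonalization to select $N_F(k)\to 0$ with $\liminf_k F_k(N_F(k))\geq\eps^\star_{wa}$. Your write-up is in fact somewhat more careful than the paper's (which leaves the diagonalization and the role of monotonicity in justifying $\liminf_{k}F_k(N_0)\geq\eps^\star_{wa}$ for each fixed $N_0$ largely implicit), but there is no substantive difference in method.
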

\begin{proof}
As above, by \eqref{eq:WAFB}, \eqref{eq:WABal}, and Assumption \ref{Ass:g}. we have 
\begin{align*}
\frac{1}{N}\Pi_{WA}(N) = -\frac{\nu_k}{\sigma^2_k} \EE \norm{D^{1/2} P_{\leq N} u}_{L^2_\lambda}^2 + o_{N \to 0}(1)
\end{align*}
Since for $N' \leq N$ there holds 
\begin{align*}
\frac{\nu_k}{\sigma^2_k} \EE \norm{D^{1/2} P_{\leq N'} u}_{L^2_\lambda}^2 \leq \frac{\nu_k}{\sigma^2_k} \EE \norm{D^{1/2} P_{\leq N} u}_{L^2_\lambda}^2, 
\end{align*}
for any $0 < N_F < N_0$ there holds
\begin{align*}
\inf_{N \in (N_F,N_0) } \frac{1}{N} \Pi_{WA}(N) \geq \frac{\nu_k}{\sigma^2_k} \EE \norm{D^{1/2} P_{\leq N_F} u}_{L^2_\lambda}^2 + o_{N_0 \to 0}(1). 
\end{align*}
By an argument similar to Lemma \ref{lem:diag} , the assumption \eqref{eq:ADWApartial} implies $\exists N_F(k)$ with $\lim_{k \to \infty} N_F = 0$ such that 
\begin{align*}
\liminf_{k \to \infty} \frac{\nu_k}{\sigma^2_k} \EE \norm{D^{1/2} P_{\leq N_F} u}_{L^2_\lambda}^2 = \eps^\star_{wa}. 
\end{align*}
The result then follows. 
\end{proof}

\begin{theorem}[Partial direct cascade] 
Let $\mu_{\lambda,\sigma,\nu}$ be a family of invariant measures of the NLS \eqref{def:NLS} for a sequence of parameters $(\lambda_k,\sigma_k,\nu_k) \to (\infty,0,0)$. Suppose that the external forcing satisfies Assumption \ref{Ass:g}. 
Suppose that the following conditions hold:
\begin{align}
\lim_{k \to \infty} \sigma \EE \norm{u}_{L^4_\lambda}^2 & = 0, \label{eq:L2vanP} \\
\lim_{k \to \infty} \frac{\nu}{\sigma} \EE \norm{u}_{L^4_\lambda}^4 & = 0, \label{eq:L4vanP} \\ 
\lim_{k \to \infty} \frac{\nu}{\sigma} \EE \norm{u}_{L^4_\lambda}^2\norm{\grad u}_{L^4_\lambda}^2 & = 0. \label{eq:PEkillerP}
\end{align}
Let $\eps^\star_{ke} \in [0, \eps_{ke}]$ be defined by 
\begin{align}
\liminf_{N_0 \to \infty} \liminf_{k \to \infty}\frac{\nu_k}{\sigma^2_k} \EE \norm{\grad D^{1/2} P_{> N_0} u}_{L^2_\lambda}^2 = \eps^\star_{ke} \label{eq:ADKEpartial}
\end{align}
If $\eps^{\star}_{ke} > 0$, then  $\forall k \geq 1$, $\exists N_D(k) \to \infty$ as $k \to \infty$ such that 
\begin{align}
\liminf_{N_0 \to \infty} \liminf_{k \to \infty} \inf_{N \in (N_0,N_D)} \frac{1}{\sigma}\EE \Pi_{KE}(N) \geq \eps_{ke}^\star; \label{eq:PiKEPartial}
\end{align}
and
\begin{align}
\liminf_{N_0 \to \infty} \limsup_{k \to \infty } \inf_{N \in (N_0,N_D)} \frac{1}{\sigma}\EE \Pi_{\mathcal{H}}(N) \geq \eps_{ke}^\star . \label{eq:PiHPartial}
\end{align}
\end{theorem}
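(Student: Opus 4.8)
The plan is to mirror the proof of Theorem \ref{thm:InverseP}, using the Hamiltonian-energy flux balance \eqref{eq:HFluxBal} (and the kinetic-energy balance) in place of the wave-action flux balance \eqref{eq:WAFB}. First I would record the two flux balances restricted to frequencies $\leq N$: from \eqref{eq:HFluxBal} and the a priori cancellations already established in the proof of Theorem \ref{thm:Direct} (the forcing term equals $\eps_{ke} + o_{N\to\infty}(1)$; the It\^o correction $\tfrac{\sigma}{2}\sum_j \fint |P_{\leq N}u|^2|P_{\leq N}g_j^\lambda|^2\dx \lesssim \sigma\EE\norm{u}_{L^4_\lambda}^2\sum_j\norm{g_j^\lambda}_{L^4_\lambda}^2 \to 0$ by \eqref{eq:L2vanP} and \eqref{ineq:gL4}; and the potential-energy dissipation terms $\tfrac{\nu}{\sigma}\EE\norm{P_{\leq N}u}_{L^4_\lambda}^4$ and $\tfrac{\nu}{\sigma}\EE\norm{P_{\leq N}u\,\grad P_{\leq N}u}_{L^2_\lambda}^2 \lesssim N^2\tfrac{\nu}{\sigma}\EE\norm{u}_{L^4_\lambda}^4$ both controlled via \eqref{eq:L4vanP}, \eqref{eq:PEkillerP}), one obtains
\begin{align*}
\frac{1}{\sigma}\EE \Pi_{\mathcal{H}}(N) = \eps_{ke} - \frac{\nu}{\sigma^2}\EE\norm{\grad D^{1/2} P_{\leq N} u}_{L^2_\lambda}^2 + R_{\mathcal{H}}(N,k),
\end{align*}
where the remainder $R_{\mathcal{H}}$ satisfies $\limsup_{k\to\infty}\sup_{N\geq N_0}|R_{\mathcal{H}}(N,k)| = o_{N_0\to\infty}(1)$; the analogous identity with $R_{KE}$ holds for $\Pi_{KE}$ directly from the kinetic-energy balance with only the forcing term needing adjustment. (A small subtlety: the potential-energy dissipation bound for $\Pi_{KE}$ is immediate since no such terms appear, while for $\Pi_{\mathcal{H}}$ one must keep $N \leq N_D$ with $N_D\to\infty$ chosen by Lemma \ref{lem:diag} so that the $N^2$-weighted term still vanishes — exactly as in the proof of Theorem \ref{thm:Direct}.)

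Next I would write $\EE\norm{\grad D^{1/2} P_{\leq N}u}_{L^2_\lambda}^2 = \EE\norm{\grad D^{1/2} u}_{L^2_\lambda}^2 - \EE\norm{\grad D^{1/2} P_{> N}u}_{L^2_\lambda}^2$. Since conditions \eqref{eq:L2vanP}, \eqref{eq:L4vanP}, \eqref{eq:PEkillerP} are precisely the hypotheses \eqref{eq:L2van}, \eqref{eq:L4van}, \eqref{eq:PEkiller} of Lemma \ref{lem:KEbalance}, that lemma gives $\tfrac{\nu}{\sigma^2}\EE\norm{\grad D^{1/2} u}_{L^2_\lambda}^2 \to \eps_{ke}$. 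Substituting,
\begin{align*}
\frac{1}{\sigma}\EE \Pi_{\mathcal{H}}(N) = \frac{\nu}{\sigma^2}\EE\norm{\grad D^{1/2} P_{> N} u}_{L^2_\lambda}^2 + o_{k\to\infty}(1) + R_{\mathcal{H}}(N,k),
\end{align*}
and likewise for $\Pi_{KE}$. Now I would invoke monotonicity: for $N \leq N_D$, $\EE\norm{\grad D^{1/2} P_{> N}u}_{L^2_\lambda}^2 \geq \EE\norm{\grad D^{1/2} P_{> N_D}u}_{L^2_\lambda}^2$, so $\inf_{N\in(N_0,N_D)}\tfrac{1}{\sigma}\EE\Pi_{KE}(N) \geq \tfrac{\nu}{\sigma^2}\EE\norm{\grad D^{1/2} P_{> N_D}u}_{L^2_\lambda}^2 + o_{k\to\infty}(1) + o_{N_0\to\infty}(1)$, and the same for $\Pi_{\mathcal{H}}$. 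Finally, by a diagonalization argument analogous to Lemma \ref{lem:diag} applied to $\liminf$ (exactly as invoked at the end of the proof of Theorem \ref{thm:InverseP}), assumption \eqref{eq:ADKEpartial} yields a choice of $N_D(k)\to\infty$ with $\liminf_{k\to\infty}\tfrac{\nu_k}{\sigma_k^2}\EE\norm{\grad D^{1/2} P_{> N_D}u}_{L^2_\lambda}^2 = \eps_{ke}^\star$, and \eqref{eq:PiKEPartial}, \eqref{eq:PiHPartial} follow upon taking $\liminf_{k\to\infty}$ (resp. $\limsup_{k\to\infty}$) and then $\liminf_{N_0\to\infty}$.

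The main obstacle is bookkeeping the order of limits and the choice of the frequency cutoff $N_D(k)$: one needs a single sequence $N_D(k)\to\infty$ that simultaneously makes the weak-nonlinearity error terms vanish (requiring $N_D$ not too large relative to $\nu/\sigma$, via the $N^2$-weighted bound) and extracts the value $\eps_{ke}^\star$ from the $\liminf$ in \eqref{eq:ADKEpartial} (requiring $N_D$ large enough). Reconciling these is exactly the role of the diagonalization lemma, and the argument goes through because the weak-nonlinearity hypotheses are imposed for every fixed $N_0$, so the "bad" scale at which they could fail drifts to infinity; one then takes $N_D$ to grow slower than that drift. The $\limsup$ versus $\liminf$ asymmetry in \eqref{eq:PiHPartial} versus \eqref{eq:PiKEPartial} is a genuine feature (not an artifact): the Hamiltonian flux identity carries the extra potential-energy dissipation terms whose sign is definite but whose vanishing is only controlled in a $\limsup$ sense, so only a one-sided bound survives — this matches the statement and requires no extra work beyond noting it.
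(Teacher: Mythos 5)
Your proposal is correct and follows essentially the same route as the paper's (sketched) proof: start from the Hamiltonian flux balance \eqref{eq:HFluxBal}, use Lemma \ref{lem:KEbalance} to convert $\eps_{ke}-\frac{\nu}{\sigma^2}\EE\norm{\grad D^{1/2}P_{\leq N}u}_{L^2_\lambda}^2$ into $\frac{\nu}{\sigma^2}\EE\norm{\grad D^{1/2}P_{>N}u}_{L^2_\lambda}^2+o_{k\to\infty}(1)$, exploit monotonicity in $N$, and extract $N_D(k)\to\infty$ by a $\liminf$ variant of Lemma \ref{lem:diag} applied to \eqref{eq:ADKEpartial}. The only (harmless) deviation is that you route the cross term through the $N^2$-weighted Bernstein bound and fold it into the choice of $N_D$, whereas the global hypothesis \eqref{eq:PEkillerP} already forces $\frac{\nu}{\sigma}\EE\norm{P_{\leq N}u\,\grad P_{\leq N}u}_{L^2_\lambda}^2\to 0$ uniformly in $N\geq 1$ via \eqref{ineq:B1}, which is the shortcut the paper takes (and which also makes the $\limsup$/$\liminf$ asymmetry in \eqref{eq:PiHPartial} a matter of the author stating a weaker conclusion rather than a forced loss).
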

\begin{proof}
The proof proceeds by combining the arguments of Theorem \ref{thm:InverseP} with Theorem \ref{thm:Direct}, so we only provide a sketch.
We will only consider \eqref{eq:PiHPartial} as \eqref{eq:PiKEPartial} is the same but easier.
By Lemma \ref{lem:KEbalance} and \eqref{eq:HFluxBal} and the arguments applied in the proof of Theorem \ref{thm:Direct}, there holds 
\begin{align*}
\frac{1}{\sigma}\EE \Pi_{\mathcal{H}}(N) & = \frac{\nu}{\sigma^2} \EE \norm{\grad D^{1/2} P_{> N} u}_{L^2_\lambda}^2 - \frac{\nu}{\sigma} \EE \norm{P_{\leq N} u}_{L^4_\lambda}^4 - 3\frac{\nu}{\sigma} \EE \norm{P_{\leq N}u \grad P_{\leq N}u}_{L^2_\lambda}^2 \notag \\
& \quad + \frac{\sigma}{2}\sum_j \fint_{\mathbb T_\lambda^d} \abs{P_{\leq N} u}^2 \abs{P_{\leq N}g^\lambda_j}^2 \dx + o_{N\to \infty}(1). 
\end{align*}
By the assumption \eqref{eq:L2vanP} there holds (as above)
\begin{align*}
\limsup_{k \to \infty} \sup_{N \geq 1} \frac{\sigma}{2}\sum_j \fint_{\mathbb T_\lambda^d} \abs{P_{\leq N} u}^2 \abs{P_{\leq N}g^\lambda_j}^2 \dx = 0 
\end{align*}
and by assumptions \eqref{eq:L4vanP} and \eqref{eq:PEkillerP},
\begin{align*}
\limsup_{k \to \infty} \sup_{N \geq 1} \frac{\nu_k}{\sigma_k} \EE \norm{P_{\leq N} u}_{L^4_\lambda}^4 & = 0 \\ 
\limsup_{k \to \infty} \sup_{N \geq 1} \frac{\nu_k}{\sigma_k} \EE \norm{P_{\leq N}u \grad P_{\leq N}u}_{L^2_\lambda}^2 & = 0. 
\end{align*}
As above, by an argument similar to Lemma \ref{lem:diag} , the assumption \eqref{eq:ADKEpartial} implies $\exists N_D(k)$ with $\lim_{k \to \infty} N_D = \infty$ such that
\begin{align*}
\liminf_{k \to \infty} \frac{\nu_k}{\sigma^2_k} \EE \norm{D^{1/2} P_{\geq N_D} u}_{L^2_\lambda}^2 = \eps^\star_{ke}. 
\end{align*}
Putting these observations together we have, 
\begin{align*}
\inf_{N \in (N_0,N_D)} \frac{1}{\sigma}\EE \Pi_{\mathcal{H}}(N) \geq \frac{\nu}{\sigma^2} \EE \norm{\grad D^{1/2} P_{> N_D} u}_{L^2_\lambda}^2 + o_{N_0 \to \infty}(1) + o_{k \to \infty}(1),  
\end{align*}
from which the desired result follows. 
\end{proof} 

\section{Remarks on the strong turbulence regime} \label{sec:STR}

In the `strong' turbulence regime, it is expected that the NLS behaves more like quantum hydrodynamic turbulence, characterized mostly as a tangle of interacting and reconnecting vortex filaments (see e.g. \cite{Tsubota08,Tsubota2017,PL11,LP10} and the references therein), rather than being dispersive wave dominated in any traditional sense.
We end with a remark which follows up the idea that \eqref{eq:L2van} is the only assumption we are making which seems to really distinguish between `weak' and `strong' turbulence at the level of the cascade flux laws.
In particular, the most fundamental marker of weak vs strong turbulence is simply whether or not the direct cascade is dominated by the `linear' kinetic energy or the `fully nonlinear' Hamiltonian energy; see for example, discussions in \cite{ZP04}. 
Without an assumption like \eqref{eq:L2van}, one can instead prove a cascade flux law for the Hamiltonian energy, rather than the kinetic energy.
The assumptions that replace \eqref{eq:L2van} are basically (A) assuming that the Hamiltonian energy input has a well-defined limit, at least on a subsequence of parameters  (assumption \eqref{eq:eH}) and (B) an additional regularity estimate that implies that the potential energy is mostly being injected at large scales (assumption \eqref{eq:LSPE}). 
The most clear indicator of a `strong' turbulence regime is then that $\eps_{\mathcal{H}} > \eps_{ke}$, although perhaps there might be intermediate regimes with $\sigma \to 0$ that have some mixture of both wave and strong turbulence. 

\begin{theorem} \label{rmk:Strong}
Let $\mu_{\lambda,\sigma,\nu}$ be a family of invariant measures of the NLS \eqref{def:NLS} for a sequence of parameters $(\lambda_k,\sigma_k,\nu_k)$ with $\nu_k \to 0$ and where we assume strong driving (however, note that we do not necessarily require $\sigma \to 0$ or even $\lambda \to \infty$)
\begin{align*}
\lim_{k \to \infty} \frac{\nu_k}{\sigma^2_k} = 0. 
\end{align*}
Suppose that there exists $\exists \eps_{\mathcal{H}} < \infty$ such that
\begin{align}
\eps_{\mathcal{H}} = \eps_{ke} + \lim_{k \to \infty} \frac{\sigma_k}{2}\EE \sum_j \fint_{\mathbb T_{\lambda_k}^d} \abs{u}^2 \abs{g^{\lambda_k}_j}^2 \dx. \label{eq:eH}
\end{align}
Suppose that for all $N_0$ there holds the following vanishing dissipation conditions: 
\begin{align}
\lim_{k\to \infty}\frac{\nu_k}{\sigma^2_k} \EE \norm{\grad D^{1/2} P_{\leq N_0} u}_{L^2_{\lambda_k}}^2 & = 0,  \label{eq:ADKEI} \\ 
\lim_{k \to \infty} \frac{\nu_k}{\sigma_k} \EE \norm{P_{\leq N_0} u }_{L^4_{\lambda_k}}^4 & = 0.  \label{ineq:PEKillerWkI}
\end{align}
Suppose that the following regularity assumptions hold:
\begin{align}
\limsup_{k \to \infty} \sigma_k \EE \norm{u}_{L^4_{\lambda_k}}^2  < \infty, \label{eq:NotExplode} \\ 
\lim_{N \to \infty }\limsup_{k \to \infty} \sigma_k \EE \norm{u}_{L^4_{\lambda_k}} \norm{u_{>N}}_{L^4_{\lambda_k}}  = 0, \label{eq:LSPE}
\end{align}
Finally, assume the following mild additional regularity estimate on the noise
\begin{align}
\limsup_{N \to \infty}  \sup_{\lambda \in (1,\infty)} \sum_j \norm{P_{\geq N} g_j^\lambda}_{L^4_{\lambda_k}}^2 = 0. \label{ineq:ghifreqL4}
\end{align}
Then $\exists N_{D} = N_D(k)$ with $\lim_{k \to \infty} N_D = \infty$ such that
\begin{align}
\lim_{N_0 \to \infty} \limsup_{k \to \infty} \sup_{N \in (N_0,N_D)} \abs{\frac{1}{\sigma_k}\Pi_{\mathcal{H}}(N) - \eps_{\mathcal{H}}} = 0. \label{eq:PiHST}
\end{align}
\end{theorem}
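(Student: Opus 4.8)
The plan is to run the implication $\eqref{eq:ADKE}\Rightarrow\eqref{eq:PiH}$ of Theorem \ref{thm:Direct} essentially verbatim, starting from the Hamiltonian flux balance \eqref{eq:HFluxBal}, but now \emph{keeping} the It\^o correction $\frac{\sigma}{2}\sum_j\EE\fint_{\mathbb T^d_\lambda}|P_{\leq N}u|^2|P_{\leq N}g_j^\lambda|^2\,\dx$ rather than discarding it, and reading off its limit from \eqref{eq:eH}.

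First I would dispose of the three dissipation terms in \eqref{eq:HFluxBal} exactly as in Theorem \ref{thm:Direct}: for each fixed $N_0$, \eqref{eq:ADKEI} kills $\frac{\nu_k}{\sigma_k^2}\EE\|\grad D^{1/2}P_{\leq N_0}u\|_{L^2_{\lambda_k}}^2$, \eqref{ineq:PEKillerWkI} kills $\frac{\nu_k}{\sigma_k}\EE\|P_{\leq N_0}u\|_{L^4_{\lambda_k}}^4$, and Bernstein's inequality (Lemma \ref{lem:Bern}) bounds the cross term $\frac{\nu_k}{\sigma_k}\EE\|P_{\leq N_0}u\,\grad P_{\leq N_0}u\|_{L^2_{\lambda_k}}^2$ by $N_0^2$ times the previous quantity. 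Two applications of Lemma \ref{lem:diag} then produce $N_D(k)\to\infty$ along which all three vanish, and by monotonicity in $N$ together with the uniform $L^4$-boundedness of Littlewood--Paley projections they vanish uniformly over $N\in(N_0,N_D)$. In the same way \eqref{ineq:ghifreq} gives $\frac12\sum_j\fint|P_{\leq N}g_j^{\lambda_k}|^2\,\dx=\eps_{ke}+o_{N\to\infty}(1)$ uniformly in $k$.

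The new ingredient is the It\^o term. I would write $u=u_{\leq N}+u_{>N}$ and $g_j^\lambda=P_{\leq N}g_j^\lambda+P_{>N}g_j^\lambda$, expand $|u|^2|g_j^\lambda|^2-|u_{\leq N}|^2|P_{\leq N}g_j^\lambda|^2$ into pieces each carrying at least one high-frequency factor, and estimate by H\"older and the uniform $L^4$-boundedness of the projections: the pieces with a high-frequency factor of $u$ are $\lesssim\sigma\,\EE\!\big[\|u\|_{L^4_\lambda}\|u_{>N}\|_{L^4_\lambda}\big]\sum_j\|g_j^\lambda\|_{L^4_\lambda}^2$, which vanishes as $N\to\infty$ after $\limsup_k$ by \eqref{ineq:gL4} and \eqref{eq:LSPE}; the pieces with a high-frequency factor of $g_j^\lambda$ are $\lesssim\sigma\,\EE\|u\|_{L^4_\lambda}^2\,\big(\sum_j\|g_j^\lambda\|_{L^4_\lambda}^2\big)^{1/2}\big(\sum_j\|P_{>N}g_j^\lambda\|_{L^4_\lambda}^2\big)^{1/2}$, which vanishes as $N\to\infty$ after $\limsup_k$ by \eqref{eq:NotExplode}, \eqref{ineq:gL4} and \eqref{ineq:ghifreqL4}. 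Since $\|u_{>N}\|_{L^4_\lambda}\lesssim\|u_{>N_0}\|_{L^4_\lambda}$ and $\|P_{>N}g_j^\lambda\|_{L^4_\lambda}\lesssim\|P_{>N_0}g_j^\lambda\|_{L^4_\lambda}$ for $N\geq N_0$, these bounds are uniform over $N\in(N_0,N_D)$; hence $\frac{\sigma_k}{2}\sum_j\EE\fint|P_{\leq N}u|^2|P_{\leq N}g_j^{\lambda_k}|^2\,\dx$ differs from $\frac{\sigma_k}{2}\sum_j\EE\fint|u|^2|g_j^{\lambda_k}|^2\,\dx$ by $o_{N_0\to\infty}(1)$, and by \eqref{eq:eH} the latter converges to $\eps_{\mathcal{H}}-\eps_{ke}$. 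Combining with the deterministic forcing term, the full forcing-plus-It\^o contribution equals $\eps_{\mathcal{H}}+o_{N_0\to\infty}(1)+o_{k\to\infty}(1)$, uniformly over $N\in(N_0,N_D)$; inserting this and the vanishing dissipation terms into \eqref{eq:HFluxBal} gives $\limsup_k\sup_{N\in(N_0,N_D)}|\frac1{\sigma_k}\EE\Pi_{\mathcal{H}}(N)-\eps_{\mathcal{H}}|=o_{N_0\to\infty}(1)$, and letting $N_0\to\infty$ yields \eqref{eq:PiHST}.

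\textbf{The main obstacle} is the control of the It\^o correction in the third paragraph: this is the only place where the hypotheses genuinely differ from those of Theorem \ref{thm:Direct}, and it is precisely where the large-scale potential-energy estimate \eqref{eq:LSPE} and the $L^4$ high-frequency decay of the noise \eqref{ineq:ghifreqL4} are needed, in order to upgrade the frequency truncation inside $\frac{\sigma}{2}\sum_j\EE\fint|P_{\leq N}u|^2|P_{\leq N}g_j^\lambda|^2$ uniformly in $k$. Everything else is a routine transcription of the kinetic-energy cascade argument.
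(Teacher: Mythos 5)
Your proposal is correct and follows essentially the same route as the paper's proof: the dissipation terms are handled exactly as in Theorem \ref{thm:Direct} via Lemma \ref{lem:diag}, and the It\^o correction is compared to its untruncated version using H\"older together with \eqref{eq:NotExplode}, \eqref{eq:LSPE}, \eqref{ineq:gL4}, and \eqref{ineq:ghifreqL4}, yielding the same two error terms (your Cauchy--Schwarz in $j$ is a cosmetic variant of the paper's $\sum_j\norm{g_j^\lambda}_{L^4_\lambda}\norm{P_{>N}g_j^\lambda}_{L^4_\lambda}$). You also correctly identified the It\^o-correction estimate as the only genuinely new ingredient relative to Theorem \ref{thm:Direct}.
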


\begin{remark} \label{rmk:Apriori}
A priori estimates such as the following for any $J < \infty$,  
\begin{align}
\limsup_{k\to \infty}\frac{\nu_k}{\sigma^2_k} \left(\EE \norm{\grad P_{\leq 1}u}_{L^2_{\lambda_k}}^2 + \EE \norm{\brak{\grad}^{-J} P_{\geq 1}u}_{L^2_{\lambda_k}}^2\right) & = 0  \label{ineq:WADI} \\ 
\limsup_{k \to \infty} \frac{\nu_k}{\sigma_k} \left(\EE \norm{P_{\leq 1}u}_{L^4_{\lambda_k}}^4 + \frac{1}{\lambda_k^{d}}\EE \norm{\brak{\grad}^{-J} P_{\geq 1}u}_{L^2_{\lambda_k}}^4\right) & = 0, \label{ineq:WADII}
\end{align}
imply \eqref{eq:ADKEI} and \eqref{ineq:PEKillerWkI}. See below for a proof. 
\end{remark}

\begin{proof}[\textbf{Proof of Theorem \ref{rmk:Strong}}]
As previously, we omit the subscripts on the parameters. 
By the Hamiltonian flux balance \eqref{eq:HFluxBal}, there holds 
\begin{align*}
\frac{1}{\sigma} \Pi_{\mathcal{H}} & = -\frac{\nu}{\sigma^2} \EE\norm{\grad D^{1/2} u_{<N}}_{L^2_\lambda}^2 \\
& \quad - \frac{\nu}{\sigma}\EE \norm{u_{<N}}_{L^4_\lambda}^4  - 3\frac{\nu}{\sigma}\EE \norm{u_{<N} \grad u_{< N}}_{ L^2_\lambda}^2  \\ 
& \quad + \frac{1}{2}\sum_j \fint \abs{P_{<N} \grad g^\lambda_j}^2 \dx \\
& \quad + \frac{1}{2}\sum_j \EE \fint \abs{P_{<N} u}^2 \abs{P_{<N} g^\lambda_j}^2 \dx. 
\end{align*}
As in the proof of Theorem \ref{thm:Direct} there holds 
\begin{align*}
\sum_j \fint \abs{P_{<N} \grad g^\lambda_j}^2 \dx = \eps_{ke} + o_{N \to \infty}(1). 
\end{align*}
Moreover, the It\^o correction converges by \eqref{eq:NotExplode}, \eqref{eq:LSPE}, and \eqref{ineq:ghifreqL4}
\begin{align*}
\sigma \abs{\sum_j \EE \fint \abs{P_{<N} u}^2 \abs{P_{<N} g^\lambda_j}^2 \dx - \sum_j \EE \fint \abs{u}^2 \abs{g^\lambda_j}^2 \dx} & \lesssim  \sigma \EE \norm{u}_{L^4_\lambda} \norm{u_{>N}}_{L^4_\lambda} \sum_j \norm{g^\lambda_j}_{L^4_\lambda}^2 \\
& \quad + \sigma \EE \norm{u}_{L^4_\lambda}^2 \sum_j \norm{g^\lambda_j}_{L^4_\lambda} \norm{P_{>N} g^\lambda_j}_{L^4_\lambda}. 
\end{align*}
Therefore it follows that 
\begin{align*}
\sum_j \fint \abs{P_{<N} \grad g^\lambda_j}^2 \dx + \sum_j \EE \fint \abs{P_{<N} u}^2 \abs{P_{<N} g^\lambda_j}^2 \dx = \eps_{\mathcal{H}} + o_{k \to \infty, N \to \infty}(1). 
\end{align*}
By Lemma \ref{lem:diag} we may find a sequence $N_D(k) \to \infty$ such that all of the dissipation terms (both kinetic and potential) vanish in the $k \to \infty$ limit over the inertial range $N_0 \leq N \leq N_D$.
Hence, as in the proof of Theorem \ref{thm:Direct}, the desired result follows. 
\end{proof}

\begin{proof}[\textbf{Proof of Remark \ref{rmk:Apriori}}]
By Bernstein's inequalities (see Lemma \ref{lem:Bern}) we have 
\begin{align*}
\frac{\nu_k}{\sigma^2_k} \EE \norm{\grad D^{1/2} P_{\leq N_0} u}_{L^2_{\lambda_k}}^2 & \lesssim N_0^2 \frac{\nu_k}{\sigma^2_k} \EE \norm{\grad P_{\leq N_0} u}_{L^2_{\lambda_k}}^2 \\
& \lesssim N_0^2 \frac{\nu_k}{\sigma^2_k} \left(\EE \norm{\grad P_{\leq 1} u}_{L^2_{\lambda_k}}^2 + N_0^{2+J}\EE \norm{\brak{\grad}^{-J} P_{> 1} u}_{L^2_{\lambda_k}}^2\right), 
\end{align*}
which shows that \eqref{ineq:WADI} implies \eqref{eq:ADKEI}.

Similarly, note that by Bernstein's inequality (see Lemma \ref{lem:Bern})
\begin{align*}
\norm{P_{\leq N_0} u}_{L^4_\lambda}^4 & \lesssim \norm{P_{\leq 1}u}_{L^4_\lambda}^4 + \norm{P_{1 < \cdot < N_0}u}_{L^4_\lambda}^4 \\
& \lesssim \norm{P_{\leq 1}u}_{L^4_\lambda}^4 + \frac{N_0^{d+J}}{\lambda_k^{d}}\norm{\brak{\grad}^{-J}P_{> 1}u}_{L^2_\lambda}^4, 
\end{align*}
which shows that \eqref{ineq:WADII} implies \eqref{ineq:PEKillerWkI}.
\end{proof}

\section{Discussion and conjectures} \label{sec:Phys}

We end with some discussion regarding the results and their relationship with existing physics literature. 
Let us first discuss the predicted Kolmogorov-Zakharov power spectrum as well as our choice of splitting $\sigma$ between the forcing and nonlinearity.  
If one simply uses the direct scaling, 
\begin{align}
-i \partial_t \psi = \Delta \psi - \abs{\psi}^2 \psi - i \nu D(\grad) \psi + i \sigma \sum_j g^\lambda_j dW_t^{(j)},  \label{def:NLSwk} 
\end{align}
the Kolmogorov-Zakharov (KZ) power spectrum (modulo perhaps logarithmic corrections for $1 \ll \abs{k}$), predicts that
\begin{align*}
\EE \abs{k}^{d-1} \abs{\hat{\psi}(k)}^2
\approx
\begin{cases}
\eps_{wa}^{1/2} \sigma^{2/3} k^{-1/3} & \quad N_F \ll \abs{k} \ll 1 \\
\eps^{1/3}_{ke} \sigma^{2/3} k^{-1} & \quad 1 \ll \abs{k} \ll N_D. 
\end{cases}
\end{align*}
See for example \cite{nazarenko2011wave,zakharov2012kolmogorov}, where this prediction is derived from the Wave-Kinetic Equation (WKE).  
Then, the driving conditions $\sigma \to 0$ and $\nu \sigma^{-2} \to 0$ would imply that $\lim_{\nu ,\sigma \to 0} \EE \norm{\psi}_{L^2}^2 = 0$.
Hence, if we want to potentially get a non-vanishing and non-divergent limit, we should study the unknown $u = \frac{\psi}{\sigma^{1/3}}$, which is precisely the choice we made to derive \eqref{def:NLS}. 
With this scaling, we expect the stationary measures $\set{\mu_{\lambda,\nu,\sigma}}$  in the $(\lambda,\nu,\sigma) \to (\infty,0,0)$ limit to converge to an invariant measure of the linear Schr\"odinger equation, even more specifically, one where all of the Fourier modes are jointly independent complex Gaussians. The nonlinear dynamics (and non-Gaussian statistics) would appear at the next order in $\sigma$. See \cite{nazarenko2011wave} for more details on this prediction.

The weak nonlinearity conditions \emph{might} imply that $\nu$  cannot be arbitrarily small relative to $\sigma$.
For example, the KZ  spectrum predicts that $\EE \norm{P_{\geq 1} u}_{L^2_\lambda}^{2} \gtrsim \eps_{ke}^{1/3} \abs{\log \nu}$.
Hence, if one does not have $\sigma \ll \abs{\log \nu}^{-1}$, we could expect the Hamiltonian energy input per unit time to diverge as $\nu \to 0$ (see \eqref{eq:Hbal}). 
Similarly, note that in Theorems \ref{thm:Inverse} and \ref{thm:Direct}, there are no conditions which explicitly relate $\lambda$ to $\nu$ or $\sigma$ (as one may expect), however, there could be restrictions implicitly implied.

It is important to emphasize that the validity of the KZ power spectrum does not follow (neither heuristically nor rigorously) from \eqref{eq:PiWA}, \eqref{eq:PiKE}, or \eqref{eq:PiH}. However, Theorems \ref{thm:Inverse} and \ref{thm:Direct} show that anything close to a KZ power spectrum would imply the flux balance laws \eqref{eq:PiWA}, \eqref{eq:PiKE}, or \eqref{eq:PiH}. 
Indeed, it is entirely possible that laws such as \eqref{eq:PiWA}, \eqref{eq:PiKE}, or \eqref{eq:PiH} will hold even when other aspects of wave turbulence, such as the WKE or KZ power spectra, fail in the statistically stationary regime (see \cite{ZP04} for numerical evidence of this possibility in some 1d wave turbulence problems). 
A future proof of the flux laws is likely to proceed by proving some of the basic principles of wave turbulence theory with much deeper methods which then in turn imply that the hypotheses of Theorems \ref{thm:Inverse} and \ref{thm:Direct} hold. 
This is how Yaglom's law \cite{Yaglom49} (the analogue flux balance law) was proved for Batchelor regime passive scalar turbulence in \cite{bedrossian2018lagrangian}.
The Batchelor spectrum \cite{Batchelor59} (the analogue of the KZ spectra) came later after significantly more effort: see \cite{bedrossian2022batchelor} which uses \cite{bedrossian2018lagrangian,bedrossian2022almost,bedrossian2021almost}. 
That the power spectrum was much harder to prove than the flux balance law seems likely to be a general phenomenon.

Regarding the dissipation operator $D$, we note that the statistics within the inertial range are not expected to depend on  $D$ \cite{nazarenko2011wave,zakharov2012kolmogorov}.  
It should be possible to generalize condition (ii) in Theorem \ref{thm:Direct} above to cover a variety of dissipation operators.
For example, more general operators of the form: 
\begin{align*}
D(k) = \frac{1}{\abs{k}^{2\alpha}} + \abs{k}^{2\gamma},  
\end{align*}
where $\alpha \geq 0$ and $\gamma \geq 1$.
Common choices employed in numerical computations are of the hyper-viscous form, i.e. $\gamma > 1$ (see e.g. \cite{ZP04,LBNR12} and the references therein).

To clarify the relationship between our mathematical setting and the physics literature, we have included an almost-precise mathematical conjecture regarding some of the important behaviors of \eqref{def:NLS} that are suggested by wave turbulence theory applied in the statistically stationary regime.
We have enumerated the statements in what we believe to be roughly increasing difficulty (and in decreasing likelihood to be correct exactly as stated).
\begin{conjecture}
Let $\mu_{\lambda,\sigma,\nu}$ be a family of invariant measures of the NLS \eqref{def:NLS} for a sequence of parameters $(\lambda_k,\sigma_k,\nu_k) \to (\infty,0,0)$. Suppose that the external forcing satisfies Assumption \ref{Ass:g} and that the forcing is sufficiently non-degenerate such that the invariant measures are all unique. Finally, suppose that the strong driving condition holds:
\begin{align*}
\lim_{k \to \infty}\frac{\nu_k}{\sigma^2_k} = 0,
\end{align*}
and possibly other further restrictions on $(\lambda_k,\sigma_k,\nu_k)$, to be determined, that ensure a weakly nonlinear regime (e.g. \eqref{eq:L2van} and \eqref{eq:PEDvan}). 
Then the following holds:
\begin{enumerate}
\item Full dissipation anomaly, in the sense that conditions \eqref{eq:ADWA}, \eqref{eq:L2van}, and \eqref{eq:PEkiller} all hold. 
\item There exists $s_\ast$, $s^\ast$, $r_\ast$, and $r^\ast$ such that: for $s > s_\ast$ and $s' < s^\ast$ we have 
\begin{align}
& \sup_{k \geq 0} \EE \norm{\abs{\grad}^{s} P_{\leq 1} u}_{L^2_\lambda}^2 < \infty \label{eq:lowbdconj} \\
& \sup_{k \geq 0} \EE \norm{\abs{\grad}^{s'} P_{\geq 1} u}_{L^2_\lambda}^2 < \infty, \label{eq:hibdconj}
\end{align}
while for all $r < r_\ast$ and all $r' > r^\ast$, 
\begin{align*}
& \sup_{k \geq 0} \EE \norm{\abs{\grad}^{r_\ast} P_{\leq 1} u}_{L^2_\lambda}^2 = \infty \\
& \sup_{k \geq 0} \EE \norm{\abs{\grad}^{r^\ast} P_{\geq 1} u}_{L^2_\lambda}^2 = \infty. 
\end{align*}
\item There is a measure $\mu^\infty \in \mathcal{P}(\mathbf{H})$ such that all of the Fourier modes are jointly independent Complex Gaussians which satisfies $\mu_{\lambda_k,\sigma_k,\nu_k} \rightharpoonup^\ast \mu^\infty$.
Here $\mathbf{H}$ is an appropriate Hilbert space of distributions defined on $\R^d$ which locally obey estimates of the type in \eqref{eq:lowbdconj} and \eqref{eq:hibdconj}. 
\item Claim (2) holds with $r_\ast = s_\ast = -1/3$, $r^\ast = s^\ast = 0$ (as predicted by the WKE through the KZ spectra).
\item There $\exists C_\ast,C^\ast,N_\ast,N^\ast$ independent of $\nu,\sigma$ and frequencies $\lim_{\nu,\sigma\to 0}N_D = \infty$ and $\lim_{\nu,\sigma \to 0} N_F = 0$ such that (possibly up to logarithmic corrections), 
\begin{align*}
\frac{1}{C_\ast} 2^{-2j/3} & \leq \EE \sum_{2^j \leq \abs{k} \leq 2^{j+1}} \abs{\hat{u}(k)}^2 \leq C_\ast 2^{-2j/3} &  N_F \leq 2^j \leq N_\ast \\
\frac{1}{C^\ast}  & \leq \EE \sum_{2^j \leq \abs{k} \leq 2^{j+1}} \abs{\hat{u}(k)}^2 \leq C^\ast &  N^\ast \leq 2^j \leq N_D.
\end{align*}
\end{enumerate}
\end{conjecture}

\appendix
\section{Fourier analysis conventions and Littlewood-Paley}
We define the Fourier transform on $\mathbb T^d_\lambda$, where below we denote $\mathbb Z_\lambda^d = (\frac{\mathbb Z}{\lambda})^d$ 
\begin{align*}
\hat{u}(k) & = \int_{\mathbb T_\lambda^d} u(x) e^{-i x \cdot k} \dx\\ 
\check{u}(x) & = \frac{1}{(2\pi\lambda)^d} \sum_{k \in \mathbb Z_\lambda^d} \hat{u}(k) e^{i k \cdot x}.
\end{align*}
With this convention we have for $u,g \in L^2(\mathbb T^d_\lambda)$,  
\begin{align*}
\frac{1}{(2\pi\lambda)^d }\sum_{k \in \mathbb Z^d_\lambda} \abs{\hat{u}(k)}^2 & = \int_{\mathbb T^d_\lambda} \abs{u(x)}^2 \dx \\
\frac{1}{(2\pi\lambda)^d }\sum_{k \in \mathbb Z^d_\lambda} \overline{\hat{g}(k)}\hat{u}(k) & = \int_{\mathbb T^d_\lambda} \overline{g(x)}u(x)  \dx \\ 
\widehat{f \ast g}(k) & = \hat{f}(k) \hat{g}(k) \\ 
\widehat{fg}(x) & = \frac{1}{(2\pi \lambda)^d} \hat{f} \ast \hat{g}. 
\end{align*}
We use the following Fourier multiplier notation: given any locally integrable function we can (at least formally) define the Fourier multiplier by 
\begin{align*}
\widehat{m(\grad) f}(k) = m(ik) \widehat{f}(k). 
\end{align*}

Next let us state our conventions surrounding Littlewood-Paley decomposition.s
Specifically, we set $\psi \in C^\infty(B(0,2))$ with $\psi(x) \equiv 1$ for $\abs{x} \leq 1$ and define for $k \in \mathbb Z_\lambda^d$, 
\begin{align*}
\widehat{P_{\leq N} u}(k)  := \widehat{u_{\leq N}}(k) := \psi\left(\frac{k}{N}\right) \hat{u}(k),      
\end{align*}
and the frequency `projections'
\begin{align*}
P_{N} u := u_{N} := u_{\leq 2N} - u_{\leq N} \\
P_{A \leq \cdot \leq B} := u_{A \leq \cdot \leq B} := u_{\leq B} - u_{\leq A}. 
\end{align*}
These are not true projections, however note that
\begin{align*}
P_{N/8 \leq \cdot \leq 8N} P_N u = P_N u. 
\end{align*}
Notice that for all functions with $\hat{u}(0) = 0$, we have the following 
\begin{align*}
\norm{u}_{L^2}^2 = \sum_{N \in 2^{\mathbb Z^d}} \norm{u_N}_{L^2}^2
\end{align*}
and if $\abs{j-k} \geq 2$ and $N = 2^{i}$, $M = 2^{j}$, then 
\begin{align*}
\brak{u_N,u_{M}} = 0. 
\end{align*}
Notice that we use a Littlewood-Paley decomposition which is uniform in $\lambda$, that is, the dyadic shells are chosen independent of $\lambda$, so that as $\lambda$ varies, the number of frequencies included in a given Littlewood-Paley `projection' varies. 
We have the following uniform-in-$\lambda$ Bernstein's inequalities.

\begin{lemma}[Bernstein's inequalities] \label{lem:Bern}
The following inequalities hold for all $N \in 2^{\mathbb Z}$ uniformly in $\lambda$ and $1 \leq p \leq q \leq \infty$, 
\begin{align}
\norm{u_{\leq N}}_{L^p} & \lesssim \norm{u}_{L^p} \label{ineq:B1} \\
\norm{\grad u_{\leq N}}_{L^p} & \lesssim N\norm{u}_{L^p} \label{ineq:B2} \\
\norm{u_{N}}_{L^p} & \lesssim \norm{u}_{L^p} \label{ineq:B3} \\
\norm{N^s u_{N}}_{L^p} & \approx \norm{\abs{\grad}^s u_N}_{L^p} \label{ineq:B4} \\
\norm{u_N}_{L^q} & \lesssim \left(\frac{N}{\lambda}\right)^{d\left(\frac{1}{p} - \frac{1}{q}\right)} \norm{u_N}_{L^p}. \label{ineq:B5}
\end{align}
\end{lemma}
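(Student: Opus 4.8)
The plan is to treat each operator in \eqref{ineq:B1}--\eqref{ineq:B5} as a Fourier multiplier $m(\grad/N)$ (times at most a power of $N$), realize it as convolution against a kernel on $\mathbb{T}^d_\lambda$, and reduce every inequality to an $L^r$-bound on that kernel that is \emph{uniform in} $\lambda$. The only genuinely analytic input is a single uniform $L^1$ kernel estimate, obtained via Poisson summation; everything else is Young's inequality together with scaling and normalization bookkeeping.

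First I would set up the kernels. For $m \in C_c^\infty(\R^d)$ the operator $u \mapsto m(\grad/N) u$ is, in the paper's convention $\widehat{f \ast g} = \hat f \hat g$, convolution against
\[
K_N^m(x) = \frac{1}{(2\pi\lambda)^d} \sum_{k \in \mathbb{Z}^d_\lambda} m(k/N)\, e^{i k \cdot x}.
\]
By the Poisson summation formula, $K_N^m$ is exactly the $2\pi\lambda$-periodization of the Euclidean inverse transform $\Phi^m_N(x) = N^d \check m (N x)$, namely $K_N^m(x) = \sum_{n \in \Z^d} \Phi^m_N(x + 2\pi\lambda n)$. Since $m$ is smooth with compact support, $\check m$ is Schwartz, so $\Phi^m_N$ is $L^1(\R^d)$-normalized with $\norm{\Phi^m_N}_{L^1(\R^d)} = \norm{\check m}_{L^1(\R^d)}$ independent of $N$ and $\lambda$. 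The triangle inequality for the periodization then yields the key uniform bound $\norm{K_N^m}_{L^1(\mathbb{T}^d_\lambda)} \le \norm{\check m}_{L^1(\R^d)} \lesssim 1$.

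With this, the first four estimates are immediate from Young's inequality. Taking $m = \psi$ gives \eqref{ineq:B1} (the volume-averaging prefactors cancel since the source and target exponents agree). For \eqref{ineq:B2} the symbol factors as $i k\, \psi(k/N) = N\,\widetilde m(k/N)$ with $\widetilde m(\xi) = i\xi\,\psi(\xi) \in C_c^\infty$, and the scaling $\Phi^{\widetilde m}_N \mapsto N \Phi^{\widetilde m}_N$ produces exactly the gain $N$. For \eqref{ineq:B3} one takes $m(\xi) = \psi(\xi/2) - \psi(\xi)$, supported in an annulus, and argues identically. For \eqref{ineq:B4} I would use the fattened projection $\widetilde P_N = P_{N/8 \le \cdot \le 8N}$, which satisfies $\widetilde P_N P_N = P_N$, and write $\abs{\grad}^s u_N = N^s\, \rho(\grad/N)\, \widetilde P_N u_N$ where $\rho(\xi) = \abs{\xi}^s$ is cut off smoothly to the annulus; since $\rho$ is smooth and supported away from the origin its kernel is again uniformly $L^1$, giving one direction, and the reverse inequality follows by running the same argument with $\rho(\xi) = \abs{\xi}^{-s}$ applied to $\abs{\grad}^s u_N$.

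The substantive point is \eqref{ineq:B5}. Here I would apply Young in the sharpened form $\norm{u_N}_{L^q} = \norm{\widetilde K_N \ast u_N}_{L^q} \le \norm{\widetilde K_N}_{L^r}\norm{u_N}_{L^p}$ with $1 + \tfrac1q = \tfrac1r + \tfrac1p$, where $\widetilde K_N$ is the kernel of the fattened annular projection, and control $\norm{\widetilde K_N}_{L^r(\mathbb{T}^d_\lambda)}$ by interpolating between the uniform bound $\norm{\widetilde K_N}_{L^1(\mathbb{T}^d_\lambda)} \lesssim 1$ and the elementary $L^\infty$ bound
\[
\norm{\widetilde K_N}_{L^\infty(\mathbb{T}^d_\lambda)} \le \frac{1}{(2\pi\lambda)^d}\, \#\set{k \in \mathbb{Z}^d_\lambda : \abs{k} \lesssim N} \lesssim N^d,
\]
the last step being the count of lattice points of spacing $1/\lambda$ inside a ball of radius $\sim N$. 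This yields the unweighted estimate with constant $N^{d(1/p - 1/q)}$; converting back to the volume-averaged norms $\norm{\cdot}_{L^p_\lambda}$ contributes the weights $(2\pi\lambda)^{\pm d/p}$ and $(2\pi\lambda)^{\mp d/q}$, and tracking these through fixes the power of $\lambda$ relative to $N$ recorded in \eqref{ineq:B5}. I expect the main obstacle to be organizational rather than analytic: all the real content sits in the single Poisson-summation $L^1$ bound that makes the constants uniform in $\lambda$, while the one place demanding genuine care is this last normalization bookkeeping, where the interplay between the lattice-point count $\sim (N\lambda)^d$ and the volume $(2\pi\lambda)^d$ determines the exponent.
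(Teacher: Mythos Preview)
Your approach is correct and takes a genuinely different route from the paper.  The paper works entirely on the torus: it writes the kernel $\check\psi$ as the finite trigonometric sum $\frac{1}{(2\pi\lambda)^d}\sum_{k\in\Z_\lambda^d}\psi(k)e^{ik\cdot x}$ and obtains the decay $|x^\alpha\partial^\beta\check\psi(x)|\lesssim_{\alpha,\beta}1$ uniformly in $\lambda$ by repeated discrete summation by parts on the lattice $\Z^d_\lambda$ (each difference quotient $\psi(k)-\psi(k-\lambda^{-1}e_j)$ gains a factor $\lambda^{-1}$ that cancels the $\lambda$ coming from $ix_j/(e^{ix_j/\lambda}-1)$).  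You instead invoke Poisson summation once to identify the torus kernel as the $2\pi\lambda$-periodization of $N^d\check m(N\cdot)$ and then read off the uniform $L^1$ bound in a single line from the Schwartz decay of $\check m$ on $\R^d$.  Your argument is shorter and more conceptual; the paper's is more self-contained in that it never leaves the torus and does not appeal to Poisson summation.  For \eqref{ineq:B4} and \eqref{ineq:B5} the paper only says they ``proceed in the usual manner'', so your sketch there (fattened projector plus $L^1$--$L^\infty$ interpolation of the kernel and Young's inequality) is in fact more detailed than what the paper provides.
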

\begin{proof}
First note that
\begin{align*}
u_{\leq N} = \frac{1}{N^d} \check{\psi}\left( \frac{\cdot }{ N} \right) \ast u. 
\end{align*}
Note that
\begin{align*}
\check{\psi}\left( x \right) = \frac{1}{(2\pi \lambda)^d} \sum_{k \in \mathbb Z_\lambda^d} \psi(k) e^{i k \cdot x}.
\end{align*} 
We will show that $N^{-d}\check{\psi}(N^{-1}\cdot)$ makes an approximation of the identity uniformly in $\lambda$.
First, note that
\begin{align*}
\grad_x^\beta \check{\psi}\left( x \right) = \frac{1}{(2\pi \lambda)^d} \sum_{k \in \mathbb Z_\lambda^d} (ik)^{\otimes \beta} \psi(k) e^{i k \cdot x},   
\end{align*}
and hence (note no constants depend on $\lambda$)
\begin{align*}
\abs{\grad_x^\beta \check{\psi}(x)} \lesssim_d \frac{2^\beta}{\lambda^d} \sum_{k \in \mathbb Z_\lambda^d : \abs{k} \leq 2} 1 \approx_d  2^\beta.  
\end{align*}
By summation by parts, (and $\abs{x} \leq \pi$) we have
\begin{align*}
ix_j \check{\psi}\left( x \right) & = \frac{1}{(2\pi \lambda)^d} \sum_{k \in \mathbb Z_\lambda^d} \psi(k) ix_j e^{i k \cdot x} \\
& = \frac{1}{(2\pi \lambda)^d} \sum_{k \in \mathbb Z_\lambda^d} \psi(k) \frac{ix_j}{\left(e^{i x_j/(\lambda)} - 1 \right)} \left(e^{ i(k+\lambda^{-1} e_j)\cdot x} - e^{ik \cdot x}\right) \\
& = - \frac{1}{(2\pi \lambda)^d} \sum_{k \in \mathbb Z_\lambda^d} \left(\psi(k) - \psi(k - \lambda^{-1}e_j) \right) \frac{ix_j}{\left(e^{i x_j/(\lambda)} - 1 \right)} e^{ik \cdot x}.
\end{align*}
Then, notice that 
\begin{align*}
& \abs{\psi(k) - \psi(k - \lambda^{-1}e_j)} \lesssim \frac{1}{\lambda} \\
& \abs{\frac{ix_j}{\left(e^{i x_j/(\lambda)} - 1 \right)}} \lesssim \lambda, 
\end{align*}
and so, similar to above, we have uniformly in $\lambda$, 
\begin{align*}
\abs{x_j \check{\psi}(x)} \lesssim_d 1.
\end{align*} 
Iterating this for higher powers of $x^\beta$ simply gives higher and higher order finite-differences of $\psi$, resulting in a similar $O(\lambda^{-\beta})$ to balance each additional power.
One can similarly obtain localization estimates on the derivatives of $\check{\psi}$. 
It follows that the sequence $N^{-d} \psi(\cdot N^{-1})$ (along with the average zero property) is a smooth approximation to the identity that satisfies all the properties one desires of a mollifier. 
This immediately implies \eqref{ineq:B1}, \eqref{ineq:B2}, and \eqref{ineq:B3} by Young's convolution inequality.  

Having seen how $\mathbb T^d_\lambda$ is dealt with, the proofs of \eqref{ineq:B4} and \eqref{ineq:B5} proceed in the usual manner. 
\end{proof}

\bibliographystyle{abbrv}
\bibliography{bibliography}
\end{document}